\documentclass[letterpaper,12pt,twoside]{amsart}
\usepackage[left=2.5cm, right=2.5cm,tmargin = 2.5cm]{geometry}
\usepackage{amsmath,amssymb,amsfonts,amsthm}

\usepackage[all]{xy}
\usepackage{tikz}
\usepackage{color}
\usepackage{graphicx}

\usepackage{indentfirst}
\usepackage[utf8]{inputenc}
\usepackage{lastpage}
\usepackage{enumitem}
\usepackage{multicol}

\renewcommand{\title}[1]{\begin{center}
    \begin{minipage}[t]{125mm}
        \Large\bf\begin{center} #1
        \end{center}
    \end{minipage}
\end{center}
\vskip3mm }

\renewcommand{\author}[2]{\centerline{#1}
\vskip2mm
\par
\centerline{\small\it
 \begin{minipage}[t]{145mm}
  \begin{center}
 #2
  \end{center}
 \end{minipage}
}
\vspace{8mm}
}

\newcommand{\resumen}[1]{
\begin{center}
\textbf{Abstract}\\

\begin{minipage}[t]{130mm}
\small
#1
\end{minipage}
\end{center}
}
\newcommand{\palabrasclave}[1]{
\begin{center}
\begin{minipage}[t]{130mm}
\small
\emph{Keywords: }
#1
\end{minipage}
\end{center}
}

\newtheorem{teorema}{Theorem}[section]
\newtheorem{lema}{Lemma}[section]

\newtheorem{proposicion}{Proposition}[section]

\theoremstyle{definition}

\newtheorem{ejemplo}{Example}[section]

\theoremstyle{remark}
\newtheorem{obs}{Observation}[section]

\newcommand\R{\mathbb{R}}

\usepackage{mathrsfs}

\begin{document}
\setcounter{page}{1}

\normalsize

\title{ \vspace{0.8in} Controllability of induced bilinear systems on the sphere}

\author{Marco A. Colque-Choquecallata$^1$, Efrain Cruz-Mullisaca$^1$, Victor H. Patty-Yujra$^{1}$}%
{
$^1$ Instituto de investigación Matemática, Universidad Mayor de San Andrés, Bolivia 
}

\newcommand{\Addresses}{{
  \bigskip
  \footnotesize

\begin{itemize}[label={},itemindent=-2em,leftmargin=2em]
\item \textsc{ M.Colque-Choquecallata}, Instituto de Investigación Matemática, Universidad Mayor de San Andrés.\\  
    Calle 27 de Cota Cota, Campus Universitario, Edificio de Ciencias Puras y Naturales, 1er Piso. La Paz-- Bolivia\par\nopagebreak
    \textit{E-mail:}\texttt{marco.colque@gmail.com }
\item \textsc{ E.Cruz-Mullisaca}, Instituto de Investigación Matemática, Universidad Mayor de San Andrés.\\  
    Calle 27 de Cota Cota, Campus Universitario, Edificio de Ciencias Puras y Naturales, 1er Piso. La Paz-- Bolivia\par\nopagebreak
    \textit{E-mail:}\texttt{ecruz@fcpn.edu.bo}
\item \textsc{ V.Patty-Yujra}, Instituto de Investigación Matemática, Universidad Mayor de San Andrés.\\  
    Calle 27 de Cota Cota, Campus Universitario, Edificio de Ciencias Puras y Naturales, 1er Piso. La Paz-- Bolivia\par\nopagebreak
    \textit{E-mail:}\texttt{vpattyy@fcpn.edu.bo}
\end{itemize}
}}

\hrule
\resumen{
In this paper, we investigate the controllability of bilinear control systems of the form $\dot{s} = As + uBs$, where $s \in \mathbb{S}^2$ and $A, B \in gl(3, \mathbb{R})$ are skew-symmetric matrices. First, we prove that the algebraic condition $[A, B] \neq 0$ ensures that the Lie algebra rank condition is satisfied for these systems. Next, we show that this same condition implies the controllability of the system. Finally, in an explicit and descriptive manner, we demonstrate controllability by exhibiting trajectories that transfer a given initial state to another.}

\palabrasclave{
Controllability; Bilinear systems; Lie algebra rank condition.}

\hrule

\section{Introduction}
Consider a bilinear control system in $\R^3,$ with control $u\in\mathbb{R},$ given by
\begin{equation}\label{sb}
\Sigma: \hspace{0.3in} \dot{x}=Ax+uBx, \hspace{0.3in} x\in\R^3\smallsetminus\{0\}, 
\end{equation} where $A, B\in gl(3,\R),$ are $3\times 3$ matrices with real coefficients. The system $\Sigma$ can be projected radially onto the unit sphere $\mathbb{S}^2\subset\mathbb{R}^3:$ for $x(t)\in\R^3\smallsetminus\{0\},$ solution of \eqref{sb}, writing $
s(t):=x(t)/|x(t)| \in \mathbb{S}^2$ we get the {\it induced bilinear system} on the sphere $\mathbb{S}^2$ given by
\begin{equation}\label{sbi}
 \dot{s}=h_A(s)+uh_B(s), \hspace{0.2in} s\in \mathbb{S}^2, u\in \R,
\end{equation}
where, for instance, $h_A(s)= As -\left\langle  As,s \right\rangle s,$ for all $s\in\mathbb{S}^2.$ If $A$ and $B$ are skew-symmetric, we have $h_A(s)=As$ and $h_B(s)=Bs,$ for all $s\in \mathbb{S}^2,$ thus, the induced bilinear system \eqref{sbi} remains as 
\begin{equation}\label{sbi1}
\mathbb{P}\Sigma: \hspace{0.3in} \dot{s}=As+uBs, \hspace{0.2in} s\in \mathbb{S}^2, u\in \R.
\end{equation}

The main goal of this paper is to study the controllability problem of the induced bilinear system $\mathbb{P}\Sigma$ on $\mathbb{S}^2.$  Using techniques from Lie Theory, it is shown that the control system \eqref{sbi1} is controllable on $\mathbb{S}^2,$ see \cite{B,V}; however, our approach in this paper is descriptive, that is, we will solve the controllability problem by making explicit the trajectories of the system that transfer a certain initial state to another.

We quote the following related papers: in \cite[Corollary 12.2.6]{CK} it is shown that the controllability of \eqref{sbi} is a necessary condition for the controllability of \eqref{sb} on $\R^3\smallsetminus\{0\},$ this motivates the present paper. The controllability problem of bilinear systems in the plane was studied by the second author \emph{et al} in \cite{C}, however, in this work the study of the Lie algebra rank condition of the system was not contemplated; the second and third author in \cite{CP} obtain an algebraic condition that ensures the range condition and with elementary techniques recover the main result of \cite{C}.

The outline of the paper is as follows: in Section \ref{sec2} we will study the Lie algebra rank condition for the induced bilinear system \eqref{sbi} on $\mathbb{S}^2$ (Theorem \ref{thlarc}); in particular, in Theorem \ref{thlarcant} we give an algebraic condition that ensures the range condition for the system $\mathbb{P}\Sigma.$  In Section \ref{sec3} we study the controllability problem for the induced bilinear system $\mathbb{P}\Sigma:$ we will prove in Theorem \ref{teocontrol} that the hypothesis that ensures the range condition also implies the controllability of $\mathbb{P}\Sigma.$ Finally, we give an illustrative example of the technique used in the proof of Theorem \ref{teocontrol}. 

\section{Lie algebra rank condition for induced bilinear systems}\label{sec2}

It is well known that a necessary condition for the controllability of the induced bilinear system  \eqref{sbi} on $\mathbb{S}^2$ is given by the {\it Lie algebra rank condition} (LARC): if we denote by $\chi(\mathbb{S}^2)$ the Lie algebra of differentiable vector fields on the sphere $\mathbb{S}^2$ then, the Lie algebra generated by the system, given by $$\mathcal{L}_{\Sigma}:=\langle h_A +uh_B\mid u\in \mathbb{R}  \rangle \subset \chi(\mathbb{S}^2),$$
is such that for all $s\in\mathbb{S}^2,$ the vector space $\mathcal{L}_{\Sigma}(s) \subset T_s\mathbb{S}^2$ satisfies
\begin{equation}
\dim \mathcal{L}_{\Sigma}(s)=\dim\langle h_A(s)+uh_B(s) \mid u\in\mathbb{R} \rangle =2;
\end{equation} see for example \cite{CK, E}. 

In this section, we will study the Lie algebra rank condition for the induced bilinear system \eqref{sbi} on $\mathbb{S}^2;$ in particular, the rank condition for the system \eqref{sbi1}. We need the following lemmas.

\begin{lema}\label{ceros} The zeros of the induced vector field $h_A$ on $\mathbb{S}^2,$ with $A\in gl(3,\mathbb{R}),$ follows the direction of the eigenvectors of the matrix $A.$
\end{lema}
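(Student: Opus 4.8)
The plan is to characterize the zeros of $h_A$ directly from the formula $h_A(s) = As - \langle As, s\rangle s$ for $s \in \mathbb{S}^2$. First I would observe that $h_A(s) = 0$ means precisely that $As = \langle As, s\rangle s$, i.e. that $As$ is a scalar multiple of $s$. Writing $\lambda = \langle As, s\rangle$, this is exactly the statement that $As = \lambda s$, so $s$ is a (unit) eigenvector of $A$ with eigenvalue $\lambda$. Conversely, if $s \in \mathbb{S}^2$ satisfies $As = \lambda s$ for some real $\lambda$, then $\langle As, s\rangle = \lambda \langle s, s\rangle = \lambda$, hence $h_A(s) = As - \lambda s = 0$. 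This establishes the equivalence: $s$ is a zero of $h_A$ on $\mathbb{S}^2$ if and only if $s$ is a unit eigenvector of $A$ (necessarily with real eigenvalue).

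The two directions above are both short computations, so there is no serious obstacle; the only point requiring a word of care is the phrase ``follows the direction of the eigenvectors'' — I would make explicit that this means the zero set of $h_A$ is exactly $\mathbb{S}^2 \cap \bigcup_{\lambda \in \mathbb{R}} \ker(A - \lambda I)$, i.e. the unit vectors lying along real eigendirections of $A$. I would also note, for completeness, that when $A$ is skew-symmetric one has $h_A(s) = As$ identically (since $\langle As, s\rangle = 0$), so in that case the zeros of $h_A$ are the unit vectors in $\ker A$, which is consistent with the general statement because the only possible real eigenvalue of a real skew-symmetric matrix is $0$.

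The single mild subtlety worth flagging is that a generic $A \in gl(3,\mathbb{R})$ need not have three real eigenvalues, but it always has at least one (odd-dimensional real matrix), so $h_A$ always has at least two antipodal zeros on $\mathbb{S}^2$; and non-real eigenvalues simply contribute no zeros, which is harmless for the statement as phrased. I would close by recording the consequence that will be used later: the set of zeros of $h_A$ on $\mathbb{S}^2$ is finite unless $A$ has a repeated real eigenvalue whose eigenspace meets $\mathbb{S}^2$ in a circle (or all of $\mathbb{S}^2$, when $A$ is a scalar multiple of the identity).
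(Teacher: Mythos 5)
Your argument is correct, and it reaches the same characterization as the paper (zeros of $h_A$ on $\mathbb{S}^2$ are exactly the unit real eigenvectors of $A$), but by a slightly different and in fact more direct route. The paper detects the parallelism of $As$ and $s$ via the Lagrange identity, writing $|h_A(s)|=|As\times s|$ for $s\in\mathbb{S}^2$ and concluding that $h_A(s)=0$ if and only if $As=\lambda s$; you instead read the condition straight off the definition, observing that $h_A(s)=0$ means $As=\langle As,s\rangle s$, so $\lambda=\langle As,s\rangle$ serves as the eigenvalue, and you also spell out the (trivial) converse, which the paper leaves implicit in its ``if and only if.'' What the paper's route buys is the pleasant norm identity $|h_A(s)|=|As\times s|$, which is not needed elsewhere; what your route buys is that it avoids the cross product and Lagrange identity altogether and makes the eigenvalue explicit. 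Your supplementary remarks (the skew-symmetric case where $h_A(s)=As$ and the zeros are the unit vectors of $\ker A$; the existence of at least one real eigenvalue in odd dimension, hence at least one antipodal pair of zeros; the description of when the zero set is infinite) are all correct and consistent with how the lemma is used later, e.g.\ in Observation \ref{ldepen1}, though none of them is required for the statement itself.
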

\begin{proof} Using the Lagrange identity we have $|h_A(s)|=|As \times s|,$ for all $s\in\mathbb{S}^2,$ where $\times$ denotes the usual cross product in $\R^3.$ Thus,  $h_A(s)=0$ if and only if  $As=\lambda s,$ for some $\lambda\in\R.$
\end{proof}

We consider the usual Lie bracket of matrices $[A,B]=AB-BA,$ for $A$ and $B$ in $gl(3,\R).$ We have the following interesting relation.

\begin{lema}\label{clie} For all $A$ and $B$ in $gl(3,\R)$ we have
\begin{equation*}
h_{[A,B]}=[h_A,h_B],
\end{equation*} where $[h_A,h_B]=dh_A(h_B)-dh_B(h_A)$ denotes the Lie bracket on $\chi(\mathbb{S}^2).$
\end{lema}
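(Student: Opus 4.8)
The plan is to realize both $h_A$ and $h_{[A,B]}$ as pushforwards of \emph{linear} vector fields on $\R^3\smallsetminus\{0\}$ and then transport the identity $[X_A,X_B]=X_{[A,B]}$, which is trivial for linear fields, down to $\mathbb{S}^2$ via the naturality of the Lie bracket under smooth maps. Write $\rho\colon\R^3\smallsetminus\{0\}\to\mathbb{S}^2$, $\rho(x)=x/|x|$, for the radial projection, and for $A\in gl(3,\R)$ let $X_A$ denote the linear vector field $X_A(x)=Ax$.

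First I would record the differential $d\rho_x(v)=|x|^{-1}\bigl(v-\langle v,x\rangle x/|x|^2\bigr)=|x|^{-1}P_x(v)$, where $P_x$ is the orthogonal projection of $\R^3$ onto $x^{\perp}$. Observing that $h_A(s)=As-\langle As,s\rangle s=P_s(As)$ for $s\in\mathbb{S}^2$, a one-line computation then gives $d\rho_x\bigl(X_A(x)\bigr)=|x|^{-1}P_x(Ax)=h_A(\rho(x))$ for every $x\neq 0$, i.e. $X_A$ is $\rho$-related to $h_A$; likewise $X_B$ is $\rho$-related to $h_B$ and $X_{[A,B]}$ to $h_{[A,B]}$. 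Next, since $X_A$ is linear its derivative $DX_A\equiv A$ is constant, so with the convention $[X,Y]=dX(Y)-dY(X)$ used in the statement one gets $[X_A,X_B](x)=A(Bx)-B(Ax)=[A,B]x=X_{[A,B]}(x)$.

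Finally I would invoke the standard fact that $\rho$-relatedness is preserved by the Lie bracket: if $X_i$ is $\rho$-related to $Y_i$ then $[X_1,X_2]$ is $\rho$-related to $[Y_1,Y_2]$. Applied here, the field $[X_A,X_B]=X_{[A,B]}$ is $\rho$-related both to $[h_A,h_B]$ and to $h_{[A,B]}$. Since $\rho$ is a surjective submersion (indeed $\rho|_{\mathbb{S}^2}=\mathrm{id}$), a vector field on $\mathbb{S}^2$ that is $\rho$-related to a given field upstairs is unique, whence $[h_A,h_B]=h_{[A,B]}$.

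The only delicate points here are bookkeeping: one must check $\rho$-relatedness on all of $\R^3\smallsetminus\{0\}$ (not merely along $\mathbb{S}^2$, where $X_A$ fails to be tangent) and keep the sign convention for $[\cdot,\cdot]$ consistent on both sides. A fully self-contained alternative is the brute-force route: expand $dh_A(s)(v)=Av-\langle Av,s\rangle s-\langle As,v\rangle s-\langle As,s\rangle v$, substitute $v=h_B(s)$, perform the symmetric subtraction $dh_A(h_B)-dh_B(h_A)$, and simplify using $|s|=1$ and the symmetry of the inner product; the multiples of $As$, $Bs$ and $s$ cancel in pairs, leaving exactly $[A,B]s-\langle[A,B]s,s\rangle s=h_{[A,B]}(s)$. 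There the main obstacle is merely organizing the roughly eight resulting terms so that the cancellation is transparent.
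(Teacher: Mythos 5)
Your proposal is correct, and your primary argument takes a genuinely different route from the paper: what you offer as the ``fully self-contained alternative'' is in fact the paper's own proof. The paper records $d(h_A)_s(v)=Av-\langle Av,s\rangle s-\langle As,v\rangle s-\langle As,s\rangle v$, substitutes $v=h_B(s)$, and lets the terms symmetric in $A$ and $B$ cancel in the difference $d(h_A)_s(h_B(s))-d(h_B)_s(h_A(s))$, leaving $(AB-BA)s-\langle (AB-BA)s,s\rangle s=h_{[A,B]}(s)$; this is short and proves the statement literally in the form it is phrased, i.e.\ for the ambient-derivative expression $dh_A(h_B)-dh_B(h_A)$. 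Your main route instead exhibits $X_A(x)=Ax$ as $\rho$-related to $h_A$ under the radial projection $\rho(x)=x/|x|$, computes $[X_A,X_B]=X_{[A,B]}$ for linear fields, and descends by naturality of the bracket together with uniqueness of the $\rho$-related field (surjectivity of $\rho$ suffices). That is conceptually cleaner: it explains the lemma as the statement that radial projection intertwines the linear dynamics with the induced dynamics, exactly the construction of \eqref{sbi} in the Introduction, and it generalizes verbatim to $gl(n,\R)$ and $\mathbb{S}^{n-1}$. What it costs is the bookkeeping you yourself flag: the sign convention $[X,Y]=dX(Y)-dY(X)$ fixed in the statement (opposite to the usual one) must be used consistently upstairs and downstairs — harmless, since relatedness is insensitive to an overall sign — and, more substantively, naturality yields the identity for the intrinsic Lie bracket on $\mathbb{S}^2$, so to land on the displayed formula you must also invoke (or verify) the standard fact that for fields tangent to an embedded submanifold the bracket is computed by differentiating the $\R^3$-valued representatives along the submanifold, the normal contributions cancelling by symmetry; your direct computation, which coincides with the paper's, settles that point at once.
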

\begin{proof}
By a straightforward computation, for all $s\in\mathbb{S}^2$ and $v\in T_s\mathbb{S}^2,$ we have
\begin{equation*}
d(h_A)_s(v)=Av-\langle Av,s\rangle s -\langle As,v\rangle s-\langle As,s\rangle v;
\end{equation*} therefore $d(h_A)_s(h_B(s))-d(h_B)_s(h_A(s)) 
= (AB-BA)s-\langle (AB-BA)s,s\rangle s= h_{[A,B]}$
like we wanted.
\end{proof}

\begin{proposicion}\label{liealgebra}
The map $\Phi : gl(3,\R) \longrightarrow \chi(\mathbb{S}^2),$ given by $\Phi(A) = h_A$
is a homomorphism of Lie algebras. In particular, if $Id\in gl(3,\R)$ denotes the identity matrix, we have $h_{Id}=0.$
\end{proposicion}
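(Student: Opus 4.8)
The plan is to verify that $\Phi$ is linear and that it respects Lie brackets, the latter being exactly the content of Lemma \ref{clie}. First I would check linearity: for $A, A' \in gl(3,\mathbb{R})$ and scalars $\alpha, \beta \in \mathbb{R}$, one has $h_{\alpha A + \beta A'}(s) = (\alpha A + \beta A')s - \langle (\alpha A + \beta A')s, s\rangle s = \alpha h_A(s) + \beta h_{A'}(s)$ for every $s \in \mathbb{S}^2$, where the middle equality is just bilinearity of the inner product together with linearity of matrix multiplication. Hence $\Phi(\alpha A + \beta A') = \alpha\,\Phi(A) + \beta\,\Phi(A')$, so $\Phi$ is a linear map between the two (infinite- and finite-dimensional) vector spaces.

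Next I would invoke Lemma \ref{clie}, which states precisely $h_{[A,B]} = [h_A, h_B]$ for all $A, B \in gl(3,\mathbb{R})$; rewritten in terms of $\Phi$ this reads $\Phi([A,B]) = [\Phi(A), \Phi(B)]$, which is the bracket-compatibility required of a Lie algebra homomorphism. Combining linearity with this identity shows $\Phi$ is a homomorphism of Lie algebras, establishing the main assertion.

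For the "in particular" clause, I would simply compute $h_{Id}$ directly from the definition: for $s \in \mathbb{S}^2$ we have $h_{Id}(s) = Id\cdot s - \langle Id\cdot s, s\rangle s = s - \langle s, s\rangle s = s - |s|^2 s = s - s = 0$, using $|s|^2 = 1$ on the sphere. Alternatively, one can note that $Id$ spans the center-adjacent scalar subspace and lies in the kernel of $\Phi$ since $h_A$ only sees the component of $As$ tangent to $\mathbb{S}^2$, and the radial vector field $s \mapsto s$ has no tangential component; but the one-line direct computation is cleanest.

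There is no real obstacle here: both parts are short and follow immediately from the definition of $h_A$ and from Lemma \ref{clie}, which has already been proved. The only thing worth stating carefully is that $\Phi$ being a Lie algebra homomorphism automatically makes its image $\Phi(gl(3,\mathbb{R}))$ a Lie subalgebra of $\chi(\mathbb{S}^2)$, a fact that will be convenient when analyzing $\mathcal{L}_\Sigma$ in the sequel, since brackets of the vector fields $h_A, h_B$ stay within the finite-dimensional family $\{h_C : C \in gl(3,\mathbb{R})\}$ and correspond to iterated matrix brackets.
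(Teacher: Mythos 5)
Your proposal is correct and follows essentially the same route as the paper: linearity of $A \mapsto h_A$ checked pointwise from the definition, bracket-compatibility obtained by citing Lemma \ref{clie}, plus the direct computation $h_{Id}(s)=s-\langle s,s\rangle s=0$ using $|s|=1$ (which the paper leaves implicit). No gaps; the remark that the image of $\Phi$ is a Lie subalgebra of $\chi(\mathbb{S}^2)$ is a harmless and accurate addition.
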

\begin{proof}
For all $s\in\mathbb{S}^2$ we have
$h_{A+uB}(s)=h_A(s)+uh_B(s)$ and, 
using Lemma \ref{clie} we get
$\Phi [A,B]=h_{[A,B]}=[h_A,h_B]=[\Phi(A), \Phi(B)].$
\end{proof}

\begin{proposicion}\label{lineald} Let $A$ and $B$ be in $gl(3,\R),$ $x\in\R^3\smallsetminus\{0\}$ and write $s:=x/|x|.$ Then, $\{ x, Ax, Bx \} \subset \R^3$ is linearly independent if and only if  $\left\lbrace h_A\left( s\right), h_B\left( s\right) \right\rbrace  \subset  T_s\mathbb{S}^2$ is linearly independent. 
\end{proposicion}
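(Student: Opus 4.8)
The plan is to exploit the fact that $h_A(s)$ is essentially the projection of $Ax$ onto the tangent space $T_s\mathbb{S}^2$, so linear independence upstairs in $\R^3$ should translate to linear independence in the tangent plane, modulo the radial direction $s$ itself. Concretely, recall that for $A$ skew-symmetric (or in general) $h_A(s) = As - \langle As,s\rangle s$, and since $x = |x|s$, we have $Ax = |x|\,As$, so $h_A(s)$ is a positive scalar multiple of the orthogonal projection of $Ax$ onto $s^{\perp} = T_s\mathbb{S}^2$. Write $\pi_s : \R^3 \to T_s\mathbb{S}^2$ for this orthogonal projection, $\pi_s(v) = v - \langle v, s\rangle s$; then $h_A(s) = \pi_s(As)$ and $h_B(s) = \pi_s(Bs)$.

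First I would prove the contrapositive in one direction: suppose $\{h_A(s), h_B(s)\}$ is linearly dependent in $T_s\mathbb{S}^2$, say $\alpha h_A(s) + \beta h_B(s) = 0$ with $(\alpha,\beta)\neq(0,0)$. Then $\pi_s(\alpha As + \beta Bs) = 0$, which means $\alpha As + \beta Bs$ is parallel to $s$, i.e. $(\alpha A + \beta B)x = \gamma x$ for some $\gamma \in \R$. Hence $\alpha Ax + \beta Bx - \gamma x = 0$ is a nontrivial linear relation among $\{x, Ax, Bx\}$ (nontrivial because $(\alpha,\beta)\neq 0$), so this set is linearly dependent. For the converse, suppose $\{x, Ax, Bx\}$ is linearly dependent: $\lambda x + \mu Ax + \nu Bx = 0$ with $(\lambda,\mu,\nu)\neq 0$. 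If $\mu = \nu = 0$ then $\lambda x = 0$, forcing $\lambda = 0$ since $x\neq 0$, a contradiction; so $(\mu,\nu)\neq 0$. Applying $\pi_s$ and dividing by $|x|$ kills the $\lambda x$ term (since $\pi_s(s)=0$) and yields $\mu h_A(s) + \nu h_B(s) = 0$, a nontrivial dependence in $T_s\mathbb{S}^2$.

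The main point to be careful about — the only real obstacle — is making sure the relations stay nontrivial when passing between the two settings: going up, the coefficient of $x$ (namely $-\gamma$) may vanish, but the pair $(\alpha,\beta)$ does not, so the three-term relation is still nontrivial; going down, one must rule out the degenerate case where only the $x$-coefficient survives, which is immediate from $x\neq 0$. I would present both directions cleanly using the projection identity $h_A(s) = \pi_s(As) = \tfrac{1}{|x|}\pi_s(Ax)$ and the kernel fact $\ker \pi_s = \R s = \R x$, so that "$\pi_s(v) = 0$" is equivalent to "$v \in \R x$". With that dictionary in place the proof is a short two-paragraph argument and requires no computation beyond what is already recorded in Lemma \ref{clie} and the formula for $h_A$.
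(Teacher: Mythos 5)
Your proof is correct and is essentially the paper's own argument: both directions are proved contrapositively using exactly the two facts the paper invokes, namely linearity of $C\mapsto h_C(s)$ with $h_{Id}=0$ (your $\pi_s(x)=0$) and the equivalence ``$h_C(s)=0$ iff $Cs\in\R s$'' (Lemma \ref{ceros}, which is your kernel fact $\ker\pi_s=\R s$), with your version merely making the orthogonal projection explicit and tracking the nontriviality of the coefficients a bit more carefully than the paper's ``without loss of generality'' phrasing. The only slip is the citation at the end: the computation you rely on is the formula for $h_A$ and Lemma \ref{ceros}, not Lemma \ref{clie} (the bracket identity), which plays no role here.
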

\begin{proof} We suppose that $\{ x, Ax, Bx \}\subset \R^3$ is linearly independent, without loss of generality, there exist $\alpha, \beta$ such that $Bx=(\alpha Id  + \beta A)x,$ thus
$h_B(s) = \alpha h_{Id}(s)+\beta h_A(s)=\beta h_A(s),$ {\it i.e.,} $h_A(s)$ and $h_B(s)$ are linearly dependent in $T_s\mathbb{S}^2.$ Reciprocally, if $\left\lbrace h_A\left( s\right), h_B\left( s\right) \right\rbrace \subset T_s\mathbb{S}^2$ is linearly dependent, {\it i.e.,} if the exists $\lambda$ such that $h_A(s)=\lambda h_B(s),$ we get $h_{A-\lambda B} (s)=0;$ using Lemma \ref{ceros}, there exists $\gamma$ with $(A-\lambda B)s=\gamma s,$ therefore $As=\gamma s+\lambda Bs,$ {\it i.e.,} $s, As$ and $Bs$ are linearly dependent.
\end{proof}

\begin{obs}\label{ldepen1} If $x_0\in\mathbb{R}^3\smallsetminus\{0\}$ is a eigenvector of $A$ or $B,$ the set $\{x_0, Ax_0, Bx_0\}$ is linearly dependent; according to Proposition \ref{lineald}, the set $\{ h_A(s), h_B(s) \}  \subset  T_s\mathbb{S}^2$ is linearly dependent in $s=x_0/|x_0|\in\mathbb{S}^2;$ in this case, we note that $h_A(s)=0$ or $h_B(s)=0,$ thus, it is necessary to consider an additional direction in the vector space $T_s\mathbb{S}^2.$\end{obs}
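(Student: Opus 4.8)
The plan is to verify, one at a time, the three assertions packed into the observation --- each following immediately from a result already in hand --- and then to read off the closing remark about the missing direction in $T_s\mathbb{S}^2$.

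First I would establish the linear dependence of $\{x_0, Ax_0, Bx_0\}$. Assume without loss of generality that $x_0$ is an eigenvector of $A$ (the case of $B$ is obtained by interchanging the two matrices), say $Ax_0=\lambda x_0$ with $\lambda\in\mathbb{R}$; then $\lambda x_0 - Ax_0 + 0\cdot Bx_0 = 0$ is a nontrivial linear relation in $\mathbb{R}^3$, so the three vectors are linearly dependent. Next I would apply Proposition \ref{lineald} in contrapositive form: since $\{x_0, Ax_0, Bx_0\}$ fails to be linearly independent, the pair $\{h_A(s), h_B(s)\}\subset T_s\mathbb{S}^2$ with $s=x_0/|x_0|$ also fails to be linearly independent, that is, $\dim\langle h_A(s), h_B(s)\rangle\le 1$.

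For the last assertion, dividing $Ax_0=\lambda x_0$ by $|x_0|$ gives $As=\lambda s$, so $s$ points along an eigenvector of $A$, and Lemma \ref{ceros} yields $h_A(s)=0$ (and symmetrically $h_B(s)=0$ when $x_0$ is an eigenvector of $B$). Hence $\dim\langle h_A(s)+uh_B(s)\mid u\in\mathbb{R}\rangle\le 1<2=\dim T_s\mathbb{S}^2$, so at such a point the system vector fields alone cannot span the tangent plane and one is forced to adjoin a Lie-bracket direction; by Lemma \ref{clie} the obvious candidate is $h_{[A,B]}(s)=[h_A,h_B](s)$, which is precisely the direction exploited under the hypothesis $[A,B]\neq 0$ in the theorems that follow.

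The argument is short and essentially obstacle-free; the only points requiring a little care are using Proposition \ref{lineald} in contrapositive (dependence) form rather than as literally stated, and keeping the ``without loss of generality'' reduction between $A$ and $B$ honest. It is also worth recording that such a situation genuinely occurs, since every real $3\times 3$ matrix --- in particular every skew-symmetric one, which has $0$ as an eigenvalue --- admits a real eigenvector.
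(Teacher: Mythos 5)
Your argument is correct and follows exactly the reasoning the paper intends for this observation: the trivial linear relation from the eigenvector equation, Proposition \ref{lineald} read in its (equivalent) contrapositive form to pass to dependence of $\{h_A(s),h_B(s)\}$, and Lemma \ref{ceros} to conclude $h_A(s)=0$ or $h_B(s)=0$, so that the system fields alone cannot span $T_s\mathbb{S}^2$. The closing remark that a real eigenvector always exists (in particular for skew-symmetric matrices, which have $0$ as an eigenvalue) is a nice addition confirming the situation is not vacuous.
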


\begin{lema}\label{ld0} For all $s\in\mathbb{S}^2,$ the vector space $\mathcal{L}_{\Sigma}(s) \subset T_s\mathbb{S}^2$ it is generated by the set \begin{equation*}
\mathcal{H}:=\left\lbrace h_A(s),\ h_B(s),\ h_{[A,B]}(s),\ h_{[A,[A,B]]}(s),\ h_{[B,[A,B]]}(s), \ldots  \right\rbrace.
\end{equation*}
\end{lema}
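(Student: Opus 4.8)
The claim is that the evaluation $\mathcal{L}_\Sigma(s)$ of the system Lie algebra at any point $s \in \mathbb{S}^2$ is spanned by the evaluations at $s$ of the iterated-bracket family $\mathcal{H}$ consisting of $h_A$, $h_B$, $h_{[A,B]}$, $h_{[A,[A,B]]}$, $h_{[B,[A,B]]}$, and so on. The key structural fact to exploit is Proposition 1.3: the map $\Phi\colon gl(3,\R) \to \chi(\mathbb{S}^2)$, $\Phi(M) = h_M$, is a Lie algebra homomorphism. I would first observe that the system vector fields $h_A + u h_B$, for $u \in \R$, all lie in the image $\Phi(\mathfrak{g})$ where $\mathfrak{g} := \langle A, B\rangle_{\mathrm{Lie}} \subseteq gl(3,\R)$ is the matrix Lie subalgebra generated by $A$ and $B$; indeed $h_A + u h_B = \Phi(A + uB)$ and $A + uB \in \mathfrak{g}$. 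Since $\Phi$ is a Lie algebra homomorphism, $\Phi(\mathfrak{g})$ is a Lie subalgebra of $\chi(\mathbb{S}^2)$ containing all the generators $h_A + u h_B$; hence $\mathcal{L}_\Sigma \subseteq \Phi(\mathfrak{g})$.

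Next I would argue the reverse-type inclusion at the level of spanning sets. The matrix Lie algebra $\mathfrak{g} = \langle A, B \rangle_{\mathrm{Lie}}$ is, by definition, spanned as a vector space by $A$, $B$, and all iterated brackets $[A,B]$, $[A,[A,B]]$, $[B,[A,B]]$, etc. — this is exactly the list whose $\Phi$-images appear in $\mathcal{H}$. Applying the linear map $\Phi$, the vector space $\Phi(\mathfrak{g})$ is spanned by $h_A$, $h_B$, $h_{[A,B]}$, $h_{[A,[A,B]]}$, $h_{[B,[A,B]]}$, \ldots, i.e. by $\mathcal{H}$ viewed inside $\chi(\mathbb{S}^2)$. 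The one point requiring a little care is the matching of the two lists: every iterated bracket word in $A,B$ reduces, via antisymmetry and the Jacobi identity, to a linear combination of left-normed brackets of the form $[X_1,[X_2,[\cdots,[X_{k-1},X_k]\cdots]]]$ with $X_i \in \{A,B\}$, which is the pattern displayed in $\mathcal H$; and conversely each element of $\mathcal H$ is the $\Phi$-image of such a word. So $\Phi(\mathfrak g) = \operatorname{span}\mathcal H$ as subspaces of $\chi(\mathbb S^2)$.

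Combining the two observations: on one hand $\mathcal L_\Sigma \subseteq \Phi(\mathfrak g)$, and on the other hand each element of $\mathcal H$ lies in $\mathcal L_\Sigma$ — indeed $h_A = \Phi(A)$ and $h_B = \Phi(B)$ are obtained from the generators $h_A + u h_B$ by evaluating at $u=0$ and by the linear-combination $h_B = (h_A + u h_B - h_A)/u$ for $u \neq 0$, while $h_{[A,B]} = [h_A, h_B]$ by Lemma 1.2 and hence lies in $\mathcal L_\Sigma$, and likewise all further iterated brackets $h_{[A,[A,B]]} = [h_A, [h_A,h_B]]$, etc., lie in $\mathcal L_\Sigma$ because $\mathcal L_\Sigma$ is a Lie subalgebra closed under brackets. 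Therefore $\operatorname{span}\mathcal H \subseteq \mathcal L_\Sigma \subseteq \Phi(\mathfrak g) = \operatorname{span}\mathcal H$, forcing equality $\mathcal L_\Sigma = \operatorname{span}\mathcal H$ as subalgebras of $\chi(\mathbb S^2)$. Evaluating at an arbitrary $s \in \mathbb S^2$ gives $\mathcal L_\Sigma(s) = \operatorname{span}\{\,v(s) : v \in \mathcal H\,\}$, which is the assertion.

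The main obstacle — really the only nonroutine point — is making the identification $\Phi(\mathfrak g) = \operatorname{span}\mathcal H$ fully rigorous: one must be sure that the left-normed iterated brackets exhibited in $\mathcal H$ genuinely span the whole derived structure of $\langle A,B\rangle_{\mathrm{Lie}}$, which is a standard but slightly fiddly consequence of the Jacobi identity and antisymmetry (the "$\ldots$" in $\mathcal H$ is doing real work here). Everything else is a formal manipulation: that $\mathcal L_\Sigma$ is by definition the smallest Lie subalgebra of $\chi(\mathbb S^2)$ containing the $h_A + u h_B$, that $\Phi$ being a Lie homomorphism makes $\Phi(\mathfrak g)$ such a subalgebra, and that pointwise evaluation $v \mapsto v(s)$ is linear. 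I would also note explicitly that in dimension $3$ this family is finite — $gl(3,\R)$ being finite dimensional forces the list $\mathcal H$ to stabilize after finitely many steps — so that the ``$\ldots$'' is harmless and $\mathcal L_\Sigma(s)$ is genuinely finite dimensional, as it must be inside $T_s\mathbb S^2$.
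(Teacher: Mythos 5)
Your proposal is correct and follows essentially the same route as the paper: both rest on the homomorphism property of $\Phi$ (Lemma \ref{clie}/Proposition \ref{liealgebra}) to turn iterated brackets of the fields $h_A+uh_B$ into $h$ of matrix brackets, together with the elementary observation that $h_A$ and $h_B$ are linear combinations of the generators $Z_u=h_A+uh_B$. Your write-up merely makes explicit, via the sandwich $\operatorname{span}\mathcal{H}\subseteq\mathcal{L}_{\Sigma}\subseteq\Phi(\langle A,B\rangle_{\mathrm{Lie}})=\operatorname{span}\mathcal{H}$ and the left-normed bracket spanning argument, what the paper's concluding ``therefore'' leaves implicit.
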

\begin{proof}
Since $\mathcal{L}_{\Sigma}(s) \subset T_s\mathbb{S}^2$ is generated by the vectors $Z_u(s)=h_A(s)+uh_B(s)\in T_s\mathbb{S}^2,$ with $u\in \R,$ taking $u=0,$ we have the direction $h_A(s);$ moreover, for all $u\neq v$ in $\R,$ we have the direction $Z_u(s)-Z_v(s)=(u-v)h_B(s).$ On the other hand, using Proposition \ref{liealgebra}, the Lie bracket of $Z_u=h_A+uh_B$ and $Z_v=h_A+vh_B,$ in $\mathcal{L}_{\Sigma},$ satisfies
\begin{align*}
[Z_u,Z_v] = [h_A+uh_B,h_A+vh_B] 
=(v-u)[h_A,h_B]=(v-u)h_{[A,B]};
\end{align*} thus, the Lie bracket of $Z_u$ and $Z_v,$ in $s,$ follows the direction of $h_{[A,B]}(s).$ Therefore, all linear combinations of $Z_u,$ with $u\in\R,$ and the Lie brackets of them, are generated by $\mathcal{H}.$
\end{proof}

According to Lemma \ref{ld0}, we have the following result.

\begin{teorema}\label{thlarc} The induced bilinear system \eqref{sbi} satisfies the Lie algebra rank condition (LARC) on $\mathbb{S}^2$ if $$\mbox{rank} \begin{pmatrix}
h_A(s) & h_B(s) & h_{[A,B]}(s) 
\end{pmatrix} = 2,
$$ for all $s\in\mathbb{S}^2.$ In particular, if $[A,B]=0,$ the bilinear system \eqref{sbi} does not satisfy LARC.
\end{teorema}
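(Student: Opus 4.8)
The plan is to exploit Lemma \ref{ld0}, which tells us that $\mathcal{L}_{\Sigma}(s)$ is spanned by the list $\mathcal{H} = \{h_A(s),\, h_B(s),\, h_{[A,B]}(s),\, h_{[A,[A,B]]}(s),\, h_{[B,[A,B]]}(s),\, \ldots\}$. Since $\mathcal{L}_{\Sigma}(s) \subset T_s\mathbb{S}^2$ and $\dim T_s\mathbb{S}^2 = 2$, the LARC simply asks that $\dim \mathcal{L}_{\Sigma}(s) = 2$ for every $s$. Thus the first part of the theorem is almost immediate: if the $2$-plane $T_s\mathbb{S}^2$ already contains two linearly independent vectors coming from the \emph{first three} elements of $\mathcal{H}$, i.e. if $\operatorname{rank}\begin{pmatrix} h_A(s) & h_B(s) & h_{[A,B]}(s)\end{pmatrix} = 2$, then certainly $\dim \mathcal{L}_{\Sigma}(s) = 2$ (it cannot exceed $2$), and the remaining iterated brackets in $\mathcal{H}$ add nothing. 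So the first assertion follows by monotonicity of the span together with the dimension bound $\dim T_s\mathbb{S}^2 = 2$.

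For the second assertion, suppose $[A,B] = 0$. By Lemma \ref{clie}, $h_{[A,B]} = [h_A,h_B] = 0$, and more generally every iterated bracket $h_{[A,[A,B]]}$, $h_{[B,[A,B]]}$, etc. vanishes identically since the matrix brackets do. Hence the list $\mathcal{H}$ collapses to $\{h_A(s),\, h_B(s)\}$, so $\mathcal{L}_{\Sigma}(s) = \langle h_A(s),\, h_B(s)\rangle$ for every $s$. It then suffices to produce a single point $s_0 \in \mathbb{S}^2$ at which $\{h_A(s_0),\, h_B(s_0)\}$ fails to span a $2$-dimensional space. I would do this by picking $s_0$ in the direction of an eigenvector: every matrix in $gl(3,\mathbb{R})$ has at least one real eigenvalue (its characteristic polynomial is a real cubic), so $A$ has a real eigenvector $x_0 \neq 0$; setting $s_0 = x_0/|x_0|$, Lemma \ref{ceros} gives $h_A(s_0) = 0$, hence $\dim \mathcal{L}_{\Sigma}(s_0) = \dim \langle h_B(s_0)\rangle \leq 1 < 2$, so LARC fails. (Alternatively one may invoke Remark \ref{ldepen1} and Proposition \ref{lineald} directly at such an $s_0$.)

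The only point requiring a little care — and the step I expect to be the mild obstacle — is the claim that, when $[A,B]=0$, all the higher iterated brackets in $\mathcal{H}$ vanish, so that no further directions can be generated. This is where Proposition \ref{liealgebra} is essential: since $\Phi\colon gl(3,\mathbb{R}) \to \chi(\mathbb{S}^2)$ is a Lie algebra homomorphism, every bracket-monomial in $h_A, h_B$ equals $h_M$ for the corresponding bracket-monomial $M$ in $A, B$, and $[A,B]=0$ forces all such $M$ of length $\geq 2$ to be zero, whence the associated vector fields vanish identically. With this observation in hand the argument is complete; no genuine computation is needed beyond what Lemmas \ref{ceros}, \ref{clie} and Proposition \ref{liealgebra} already supply.
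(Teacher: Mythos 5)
Your proposal is correct and follows essentially the same route as the paper: the first assertion via Lemma \ref{ld0} plus the dimension bound $\dim T_s\mathbb{S}^2=2$, and the second by noting that $[A,B]=0$ kills all iterated brackets and then using an eigenvector direction (Lemma \ref{ceros} / Observation \ref{ldepen1}) where $h_A$ vanishes. Your only addition is to make explicit that a real eigenvector of $A$ exists because its characteristic polynomial is a real cubic, a point the paper leaves implicit.
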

\begin{proof}
The condition ensures the linear independence of two vectors in $T_s\mathbb{S}^2,$ for all $s\in\mathbb{S}^2.$ If $[A,B]=0,$ we have $h_{[A,B]}=h_{[A,[A,B]]}=h_{[B,[A,B]]}=\cdots=0;$ by Observation \ref{ldepen1}, the system \eqref{sbi} does not satisfy LARC.   
\end{proof}

\begin{ejemplo} We consider the bilinear control system $\Sigma$ in $\R^3\smallsetminus\{0\}$ given by
\begin{equation*}
\dot{x}=Ax+uBx=\begin{pmatrix}
0 & 1 & 0 \\ -1 & 0 & 0 \\ 0 & 0 & 0 
\end{pmatrix}x+u \begin{pmatrix}
0 & 0 & 0 \\ 0 & 0 & 1 \\ 0 & -1 & 0
\end{pmatrix}x. 
\end{equation*} Writing $s=\begin{pmatrix}
s_1 & s_2 & s_3
\end{pmatrix}^{\top}\in \mathbb{S}^2,$ by a straightforward computation, the induced bilinear system is given by
\begin{equation*}
\dot{s}=h_A(s)+uh_B(s)=\begin{pmatrix}
s_2 \\ -s_1 \\ 0
\end{pmatrix} + u \begin{pmatrix}
0 \\ s_3 \\ -s_2
\end{pmatrix}, \ \ \ s\in\mathbb{S}^2, \ \ u\in\R.
\end{equation*} 
Since $
[A,B]=AB-BA=\begin{pmatrix}
0 & 0 & 1 \\ 0 & 0 & 0 \\ -1 & 0 & 0 
\end{pmatrix},$ we have
$h_{[A,B]}(s)=\begin{pmatrix}
s_3 \\ 0 \\ -s_1
\end{pmatrix}.$
Finally, since $s\in\mathbb{S}^2,$ without loss of generality, we can assume that $s_2\neq 0,$ thus
\begin{align*}
\mbox{rank} \begin{pmatrix}
h_A(s) & h_B(s) & h_{[A,B]}(s) 
\end{pmatrix} =\mbox{rank} \begin{pmatrix}
s_2 & 0 & s_3 \\ -s_1 & s_3 & 0 \\ 0 & -s_2 & -s_1
\end{pmatrix} = \mbox{rank} \begin{pmatrix}
1 & 0 & \frac{s_3}{s_2} \\ 0 & 1 & \frac{s_1}{s_2} \\ 0 & 0 & 0
\end{pmatrix}=2;
\end{align*} according to Theorem \ref{thlarc}, the induced bilinear system satisfies the rank condition on $\mathbb{S}^2.$
\end{ejemplo}

\begin{ejemplo} We consider the bilinear control system $\Sigma$ in $\R^3\smallsetminus\{0\}$ given by
\begin{equation*}
\dot{x}=Ax+uBx=\begin{pmatrix}
0 & 1 & 0 \\ 0 & 0 & 0 \\ 0 & 0 & 0 
\end{pmatrix}x+u \begin{pmatrix}
0 & 0 & 1 \\ 0 & 0 & 0 \\ 0 & 0 & 0
\end{pmatrix}x. 
\end{equation*} Writing as before $s=\begin{pmatrix}
s_1 & s_2 & s_3
\end{pmatrix}^{\top}\in \mathbb{S}^2,$ the induced bilinear system remains as
\begin{equation*}
\dot{s}=h_A(s)+uh_B(s)=\begin{pmatrix}
s_2 - s_1^2s_2 \\ -s_1s_2^2 \\ -s_1s_2s_3
\end{pmatrix} + u \begin{pmatrix}
s_3-s_1^2s_3 \\ -s_1s_2s_3 \\ -s_1s_3^2
\end{pmatrix}, \ \ \ s\in\mathbb{S}^2, \ \ u\in\R.
\end{equation*} 
Since $[A,B]=0,$ the induced system does not satisfy LARC: we have $h_A(s)=h_B(s)=0$ in $s=(1,0,0) \in\mathbb{S}^2.$ 
\end{ejemplo}

We will prove that if the matrices $A$ and $B$ are skew-symmetric, the induced bilinear system $\mathbb{P}\Sigma$ given in \eqref{sbi1} satisfies LARC on $\mathbb{S}^2 ;$ we need the following lemma.

\begin{lema}\label{jordanA} If $A\in gl(3,\R)$ is skew-symmetric, there exist $a>0$ and $P\in gl(3,\R)$ orthogonal such that
\begin{equation*}
\mathcal{J}(A):=P^{-1}AP=\begin{pmatrix}
0 & a & 0 \\-a & 0 & 0\\ 0 & 0 & 0
\end{pmatrix}.  
\end{equation*} 
\end{lema}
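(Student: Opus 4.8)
The plan is to exploit the fact that a real skew-symmetric $3\times 3$ matrix $A$ has rank $0$ or $2$ (its nonzero eigenvalues come in a conjugate pair $\pm ia$ with $a>0$, plus a zero eigenvalue) and to produce the orthogonal change of basis explicitly from its eigenstructure. First I would dispose of the degenerate case: if $A=0$ the statement is vacuous (or one takes $a$ arbitrary), so assume $A\neq 0$. Writing $Av = v\times \omega$ for the associated angular-velocity vector $\omega\in\R^3\smallsetminus\{0\}$ (equivalently, $A$ is the matrix of $x\mapsto \omega\times x$ up to sign), the kernel of $A$ is exactly $\mathrm{span}(\omega)$, which is one-dimensional since $\omega\neq 0$.

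Next I would build the orthogonal matrix $P$ column by column. Set $e_3 := \omega/|\omega|$, a unit vector spanning $\ker A$. Choose any unit vector $e_1$ orthogonal to $e_3$, and put $e_2 := e_3\times e_1$, so that $(e_1,e_2,e_3)$ is an orthonormal right-handed basis of $\R^3$; let $P$ be the matrix with columns $e_1,e_2,e_3$, which is orthogonal. Then I would compute $P^{-1}AP = P^\top A P$ entrywise: the third column and third row vanish because $Ae_3=0$ and because $A$ is skew-symmetric (so $\langle Ae_i,e_3\rangle = -\langle e_i, Ae_3\rangle = 0$); the diagonal entries $\langle Ae_i,e_i\rangle$ vanish by skew-symmetry; and the remaining off-diagonal entry $a := \langle Ae_2,e_1\rangle = -\langle Ae_1,e_2\rangle$ is a real number. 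This already gives the claimed normal form up to the sign of $a$.

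Finally I would fix the sign: $a\neq 0$ because otherwise $P^\top A P$, hence $A$, would be zero, contrary to assumption; and if $a<0$ I replace $e_1$ by $-e_1$ (equivalently swap the roles of $e_1$ and $e_2$, adjusting orientation), which flips the sign of the off-diagonal entry and yields $a>0$ while keeping $P$ orthogonal. This completes the construction.

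The only mild obstacle is the bookkeeping in the entrywise computation of $P^\top A P$ and making sure each vanishing entry is justified by the right instance of skew-symmetry or by $Ae_3=0$; there is no genuine difficulty, since everything reduces to the identities $\langle Ax,y\rangle=-\langle x,Ay\rangle$ and $\ker A = \mathrm{span}(\omega)$. Alternatively, one may quote the standard spectral theory of normal (or skew-symmetric) operators to get the block-diagonal real canonical form directly, but the explicit basis construction above is self-contained and is what I would write.
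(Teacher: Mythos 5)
Your proof is correct, but it takes a genuinely different route from the paper's. The paper argues computationally: after splitting off the case $a_2=a_3=0$ (where it takes $P=Id$), it computes the characteristic polynomial $p(\lambda)=\lambda(\lambda^2+a^2)$ with $a=\sqrt{a_1^2+a_2^2+a_3^2}$, writes down explicit normalized eigenvectors for the eigenvalues $0$ and $\pm ia$ in terms of the entries of $A$, and takes $P$ to be the orthogonal matrix whose columns are these vectors. You instead use the cross-product description of $3\times 3$ skew-symmetric matrices to produce a unit vector $e_3$ spanning $\ker A$, complete it to an orthonormal basis, and check entrywise that $P^{\top}AP$ has the stated form using only the identities $\langle Ax,y\rangle=-\langle x,Ay\rangle$ and $Ae_3=0$, with a final sign adjustment ($e_1\mapsto -e_1$) to force $a>0$. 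Your argument is more conceptual: it avoids the case split and the bookkeeping of explicit eigenvector formulas, and it treats the sign of $a$ more carefully than the paper does (the paper's choice $P=Id$ when $a_2=a_3=0$ silently requires $a_1>0$). What the paper's computation buys is an explicit $P$ and $a$ written directly in terms of $a_1,a_2,a_3$, which can be convenient for concrete examples, though the rest of the paper only uses existence. Note also that both proofs tacitly assume $A\neq 0$ (for $A=0$ no $a>0$ can work), which is the only situation in which the lemma is used.
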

\begin{proof}
We suppose that $ A=\begin{pmatrix}
    0&a_1&a_2\\-a_1 &0&a_3\\ -a_2&-a_3&0
\end{pmatrix}\neq 0.$ If $a_2=0$ and $a_3=0,$ taking $P=Id$ we get the result; we can suppose that $\alpha:=a_2^2+a_3^2\neq 0.$ Writing $a:=\sqrt{a_1^2+a_2^2+a_3^2},$ the characteristic polynomial of $A$ is given by
$p(\lambda)=\det(\lambda I-A)=\lambda(\lambda^2+a^2),$ thus, $\lambda_{1,2}= ia$ and $\lambda_3=0$ are its eigenvalues.
The associated eigenvectors are given by
$$ v_1=\dfrac{1}{a\sqrt{\alpha}}\begin{pmatrix}
  -a_1 a_3\\ a_1 a_2\\ \alpha
\end{pmatrix},\quad v_2=\dfrac{-1}{\sqrt{\alpha}}\begin{pmatrix}
    a_2\\ a_3\\0
\end{pmatrix},\quad v_3=\dfrac{1}{a}\begin{pmatrix}
     a_3 \\-a_2 \\ a_1
\end{pmatrix},$$
which form an orthonormal set. The matrix $P:=\begin{pmatrix} v_1 & v_2 & v_3  
\end{pmatrix}$ is orthogonal and satisfies the equality described above
\end{proof}

\begin{obs}\label{obcasosimetrico}
We consider the bilinear control system $\Sigma,$ given in \eqref{sb}, with $A$ and $B$ skew-symmetric, and the matrix $P$ given in Lemma \ref{jordanA}. Writing $y=P^{-1}x,$ we get
\begin{equation*}
\dot{y}=P^{-1}\dot{x}= P^{-1}(A+uB)x =P^{-1}(A+uB)Py=P^{-1}APy+uP^{-1}BPy;
\end{equation*} the matrix $\tilde{B}:=P^{-1}BP$ is skew-symmetric, thus, we have the equivalent bilinear control system
\begin{equation*}\label{sisbJordan}
    \dot{y}=\mathcal{J}(A)y+u\tilde{B}y. 
\end{equation*} Moreover, since 
\begin{equation*}
[\mathcal{J}(A),\tilde{B}]=[P^{-1}AP,P^{-1}BP]=P^{-1}[A,B]P,
\end{equation*} we have $[\mathcal{J}(A),\tilde{B}]\neq 0$ if and only if $[A,B]\neq 0.$ Therefore, it is sufficient to study the Lie algebra rank condition of induced bilinear systems of the form
\begin{equation}\label{sistemajordan}
\mathbb{P}\Sigma: \hspace{0.3in} \dot{s}=As+uBs=\begin{pmatrix}
    0&a&0\\-a&0&0\\0&0&0
\end{pmatrix}s+u \begin{pmatrix}
    0&b_1&b_2\\-b_1&0&b_3\\-b_2&-b_3&0
\end{pmatrix}s, \hspace{0.3in} a>0, \ s\in\mathbb{S}^2.
\end{equation}
Finally, in this case note that: $[A,B]\neq 0$ if and only if $b_2^2+b_3^2\neq 0.$
\end{obs}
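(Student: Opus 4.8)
The plan is to read Observation~\ref{obcasosimetrico} as a normalization statement and to check its ingredients in the natural order: that the substitution $y=P^{-1}x$ is an honest equivalence of the control systems (both in $\R^3\smallsetminus\{0\}$ and on $\mathbb{S}^2$), that the bracket hypothesis is insensitive to this substitution, and finally that for the resulting normal form the condition $[A,B]\neq0$ coincides with $b_2^2+b_3^2\neq0$.

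First I would use that $P$ is orthogonal, so $P^{-1}=P^{\top}$ and hence $\mathcal{J}(A)=P^{\top}AP$ and $\tilde B=P^{\top}BP$ are again skew-symmetric; in particular $\mathcal{J}(A)$ has the shape furnished by Lemma~\ref{jordanA} with $a>0$, while $\tilde B$ is an arbitrary $3\times 3$ skew-symmetric matrix, which I may write as $\begin{pmatrix}0&b_1&b_2\\-b_1&0&b_3\\-b_2&-b_3&0\end{pmatrix}$. If $x(\cdot)$ solves $\dot x=(A+uB)x$ and $y:=P^{-1}x$, then $\dot y=P^{-1}(A+uB)Py=(\mathcal{J}(A)+u\tilde B)y$, so the two ambient bilinear systems are linearly conjugate; since $|P^{-1}x|=|x|$, the same map restricts to a diffeomorphism $\varphi:\mathbb{S}^2\to\mathbb{S}^2$, $\varphi(s)=P^{-1}s$, and because $h_A(s)=As$ and $h_B(s)=Bs$ for skew-symmetric matrices, $\varphi$ intertwines the induced system $\mathbb{P}\Sigma$ with data $(A,B)$ and the one with data $(\mathcal{J}(A),\tilde B)$. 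As LARC (and, in the next section, controllability) is a diffeomorphism invariant, it is enough to treat systems of the form~\eqref{sistemajordan}.

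Next I would invoke the standard identity $[P^{-1}AP,P^{-1}BP]=P^{-1}[A,B]P$, which follows by cancelling $PP^{-1}$ in the middle terms; since conjugation by the invertible $P$ is injective on $gl(3,\R)$, the left-hand side vanishes exactly when $[A,B]$ does, giving $[\mathcal{J}(A),\tilde B]\neq0\iff[A,B]\neq0$. Finally, for the normal-form matrices one computes $AB-BA$ directly and obtains $[A,B]=\begin{pmatrix}0&0&ab_3\\0&0&-ab_2\\-ab_3&ab_2&0\end{pmatrix}$; since $a>0$ and $b_2,b_3$ are real, this matrix is zero precisely when $b_2=b_3=0$, i.e.\ when $b_2^2+b_3^2=0$, which is the last assertion.

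I do not anticipate a genuine obstacle: each step is either the elementary fact that orthogonal conjugation preserves skew-symmetry and commutes with the Lie bracket, or a short matrix multiplication. The only point worth stating with care is the first one — that the equivalence is not merely between the linear flows in $\R^3$ but between the induced systems on $\mathbb{S}^2$, so that passing to~\eqref{sistemajordan} really is legitimate for the rank condition (and later for controllability); this is immediate once one notes that $\varphi$ is a diffeomorphism of $\mathbb{S}^2$ carrying $h_A$ to $h_{\mathcal{J}(A)}$ and $h_B$ to $h_{\tilde B}$.
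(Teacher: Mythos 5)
Your proposal is correct and follows essentially the same route as the paper: the change of variables $y=P^{-1}x$, preservation of skew-symmetry under orthogonal conjugation, the identity $[P^{-1}AP,P^{-1}BP]=P^{-1}[A,B]P$, and the direct computation showing $[A,B]=0$ exactly when $b_2=b_3=0$. The only difference is that you make explicit why the equivalence descends to the induced systems on $\mathbb{S}^2$ (orthogonality of $P$ gives $|P^{-1}x|=|x|$, so conjugation restricts to a diffeomorphism of the sphere intertwining $h_A,h_B$ with $h_{\mathcal{J}(A)},h_{\tilde B}$), a point the paper leaves implicit; this is a welcome clarification rather than a different argument.
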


\begin{teorema}\label{thlarcant} The induced bilinear system $\mathbb{P}\Sigma:$ $\dot{s}=As+uBs,$ given by \eqref{sistemajordan}, satisfies the Lie algebra rank condition on the sphere $\mathbb{S}^2$ if $[A,B]\neq 0.$
\end{teorema}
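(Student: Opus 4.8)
The plan is to apply Theorem \ref{thlarc}: it suffices to verify that $\mbox{rank}\begin{pmatrix} h_A(s) & h_B(s) & h_{[A,B]}(s)\end{pmatrix}=2$ for every $s\in\mathbb{S}^2$. Since $A$, $B$ and $[A,B]$ are all skew-symmetric, each of the three columns equals $As$, $Bs$, $[A,B]s$ respectively and hence lies in $T_s\mathbb{S}^2=s^{\perp}$, a plane; so the rank is automatically at most $2$, and the only thing to prove is that these three vectors are never all proportional. Conceptually this is transparent: a skew-symmetric $3\times 3$ matrix acts as the cross product with its axis vector; $[A,B]\neq 0$ is exactly the statement (see Observation \ref{obcasosimetrico}, $b_2^2+b_3^2\neq 0$) that the axes of $A$ and $B$ are non-collinear, and the axis of $[A,B]$ is their cross product, so the three axes span $\R^3$; since $\omega\mapsto\omega\times s$ has kernel $\R s$ and image $s^{\perp}$, it carries this spanning triple onto a spanning set of the $2$-plane $T_s\mathbb{S}^2$. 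I will, however, carry this out by the explicit computation the paper favours.

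First I would record $h_A(s)=As=(as_2,-as_1,0)^{\top}$, $h_B(s)=Bs=(b_1s_2+b_2s_3,\ -b_1s_1+b_3s_3,\ -b_2s_1-b_3s_2)^{\top}$, and
\begin{equation*}
[A,B]=AB-BA=a\begin{pmatrix}0&0&b_3\\0&0&-b_2\\-b_3&b_2&0\end{pmatrix},\qquad h_{[A,B]}(s)=a\,(b_3s_3,\ -b_2s_3,\ -b_3s_1+b_2s_2)^{\top},
\end{equation*}
recalling from Observation \ref{obcasosimetrico} that $[A,B]\neq 0$ is equivalent to $b_2^2+b_3^2\neq 0$. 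I then split into two cases according to whether $h_A(s)$ vanishes.

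If $h_A(s)\neq 0$, i.e.\ $(s_1,s_2)\neq(0,0)$, then a rank equal to $1$ would force the common line to be spanned by $h_A(s)=(as_2,-as_1,0)^{\top}$, so $h_B(s)$ and $h_{[A,B]}(s)$ would be scalar multiples of $(s_2,-s_1,0)^{\top}$; in particular their third coordinates vanish, giving $-b_2s_1-b_3s_2=0$ and $-b_3s_1+b_2s_2=0$. This is a homogeneous linear system in $(s_1,s_2)$ whose coefficient matrix has determinant $-(b_2^2+b_3^2)\neq 0$, so $(s_1,s_2)=(0,0)$, a contradiction; hence the rank is $2$. If instead $h_A(s)=0$, then $s=(0,0,\pm 1)^{\top}$, and $h_B(s)=\pm(b_2,b_3,0)^{\top}$ and $h_{[A,B]}(s)=\pm a(b_3,-b_2,0)^{\top}$ are nonzero (again because $b_2^2+b_3^2\neq 0$) and mutually orthogonal, hence linearly independent, so once more the rank is $2$. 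Combining the two cases and invoking Theorem \ref{thlarc} yields LARC.

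The only delicate points are the eigendirections of $A$, exactly as Observation \ref{ldepen1} anticipates: there $h_A$ degenerates (and at a generic eigendirection $h_B$ could a priori degenerate too), so the second tangent direction must be supplied entirely by $h_{[A,B]}$, and the computation shows it is, precisely because $b_2^2+b_3^2\neq 0$. Away from those points the argument is routine two-by-two linear algebra, so I expect the case $h_A(s)=0$ to be the only real step.
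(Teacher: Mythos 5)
Your argument is correct, and its backbone is the same as the paper's: reduce to Theorem \ref{thlarc} and check that the matrix with columns $h_A(s)$, $h_B(s)$, $h_{[A,B]}(s)$ has rank $2$ for the normal-form pair of Observation \ref{obcasosimetrico}, using exactly the same computed columns (your $[A,B]$ and $h_{[A,B]}$ agree with the paper's). Where you genuinely differ is in how the rank is verified. The paper runs a coordinate case analysis ($s_2\neq 0$, $s_3\neq 0$; $s_2=0$ with three subcases; $s_3=0$), row-reducing $C$ in each case and concluding rank $2$ if and only if $b_2^2+b_3^2\neq 0$. You instead split only on whether $h_A(s)$ vanishes, i.e.\ whether $s=(0,0,\pm 1)$: away from the poles, rank $1$ would force the third coordinates $-b_2s_1-b_3s_2$ and $a(-b_3s_1+b_2s_2)$ to vanish, and the single $2\times 2$ determinant $-(b_2^2+b_3^2)\neq 0$ forces $(s_1,s_2)=(0,0)$, a contradiction; at the poles you exhibit the nonzero orthogonal pair $(b_2,b_3,0)^{\top}$ and $a(b_3,-b_2,0)^{\top}$. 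This buys a shorter and more uniform verification (one determinant instead of several row reductions), makes explicit that the rank is automatically at most $2$ because the columns lie in $s^{\perp}$ by skew-symmetry, and your remark identifying skew-symmetric matrices with cross products explains conceptually why $[A,B]\neq 0$, i.e.\ $b_2^2+b_3^2\neq 0$, is precisely the hypothesis needed at the degenerate points singled out in Observation \ref{ldepen1}. The paper's longer analysis records the sharper ``rank $2$ if and only if $b_2^2+b_3^2\neq 0$'' in every case, but for the theorem as stated your proof is complete.
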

\begin{proof}
By a straightforward computation, we have
\begin{equation*}
[A, B]=\begin{pmatrix}
             0&0&ab_3\\ 0&0&-ab_2\\ -ab_3&ab_2&0
         \end{pmatrix};       
\end{equation*} thus, for all $s=(s_1,s_2,s_3)\in \mathbb{S}^2$ we get
\begin{equation*}
h_A(s)=\begin{pmatrix}
        as_2\\-as_1\\0
    \end{pmatrix}, \hspace{0.3in}  h_B(s)=\begin{pmatrix}
        b_1s_2+b_2 s_3\\-b_1s_1+b_3 s_3\\-b_2s_1-b_3s_2
    \end{pmatrix} \hspace{0.2in}\mbox{and}\hspace{0.2in}
    h_{[A,B]}(s)=\begin{pmatrix}
        ab_3 s_3\\-ab_2s_3\\-ab_3s_1+ab_2s_2
    \end{pmatrix}. 
\end{equation*}
We will show that the matrix
\begin{equation*}
C=\begin{pmatrix}
h_A(s) & h_B(s) & h_{[A,B]}(s) 
\end{pmatrix}
=\begin{pmatrix}
as_2&b_1s_2+b_2s_3&ab_3s_3\\ -as_1&-b_1s_1+b_3s_3&-ab_2s_3\\0&-b_2s_1-b_3s_2&-ab_3s_1+ab_2s_2
\end{pmatrix}    
\end{equation*}
has rank 2 at all points of $\mathbb{S}^2.$ Let us consider the following cases:

\noindent\textbf{Case 1.} If $s_2\neq 0$ and $s_3\neq 0,$ we easily obtain
\begin{equation*}
C \sim  \begin{pmatrix}
as_2&b_1s_2+b_2s_3&ab_3s_3\\ 0&b_2\dfrac{s_1 s_3}{s_2}+b_3s_3&-ab_2s_3+ab_3\dfrac{s_1s_3}{s_2}\\0&0&0
\end{pmatrix};
\end{equation*} thus, $C$ has rank 1 if and only if $b_2 s_1 s_3+b_3 s_2 s_3=0$ and $-ab_2 s_2 s_3+a b_3 s_1 s_3=0$, this happens if and only if $b_2=b_3=0,$ therefore, $\mbox{rank}(C)=2$ if and only if $b_2^2+b_3^2\neq 0.$

\noindent\textbf{Case 2.} If $s_2=0,$ we get
\begin{equation*}
C \sim \begin{pmatrix}
        -as_1&-b_1s_1+b_3s_3&-ab_2s_3\\0&b_2s_3&ab_3s_3\\0&-b_2s_1&-ab_3s_1
    \end{pmatrix}; 
\end{equation*} consider the following cases:
\begin{enumerate}
    \item[\bf 2.1.] If $s_1\neq 0$ and $s_3\neq 0,$ we have that $\mbox{rank}(C)=2$ if and only if $b_2^2+b_3^2\neq 0.$
    \item[\bf 2.2.] If $s_1=0,$ we have $s_3=1,$ therefore
\begin{equation*}
    C=\begin{pmatrix}
        0&b_2&ab_3\\0&b_3&-ab_2\\0&0&0
    \end{pmatrix};
\end{equation*} thus, we have that $\mbox{rank}(C)=2$ if and only if $b_2^2+b_3^2\neq 0.$

\item[\bf 2.3.] If $s_3=0,$ we have $s_1=1,$ thus  
\begin{equation*}
    C=\begin{pmatrix}
        0&0&0\\-a&-b_1&0\\0&-b_2&-ab_3
    \end{pmatrix};
\end{equation*} as before, $\mbox{rank}(C)=2$ if and only if $b_2^2+b_3^2\neq 0.$
\end{enumerate} 
\textbf{Case 3.} If $s_3=0$ we can assume that $s_2\neq 0;$ in fact, if $s_2=0$ we return to Case 2.3. We have thus \begin{equation*}
    C\sim \begin{pmatrix}
            as_2&b_1s_2&0\\ 0&0&0\\0&-b_2s_1-b_3s_2&-ab_3s_1+ab_2s_2
        \end{pmatrix};
\end{equation*} therefore,
$\mbox{rank}(C)=1$ if and only if $-b_2s_1-b_3s_2=0$ and $ab_2s_2-ab_3s_1=0,$ this happens if and only if $b_2=b_3=0.$ Finally, $\mbox{rank}(C)=2$ if and only if $b_2^2+b_3^2\neq 0.$
\end{proof}

\section{Controllability of induced bilinear systems }\label{sec3}
According to Observation \ref{obcasosimetrico}, in this section we will study the controllability problem of induced bilinear systems of the form
\begin{equation}\label{sistemajordan1}
\mathbb{P}\Sigma: \hspace{0.3in} \dot{s}=As+uBs=\begin{pmatrix}
    0&a&0\\-a&0&0\\0&0&0
\end{pmatrix}s+u \begin{pmatrix}
    0&b_1&b_2\\-b_1&0&b_3\\-b_2&-b_3&0
\end{pmatrix}s, \hspace{0.3in} a>0, \ s\in\mathbb{S}^2;
\end{equation} 
we will explicitly determine the trajectories and controls that solve the controllability problem. Writing $$\beta:=\sqrt{(a+ub_1)^2+(ub_2^2)+(ub_3)^2},$$ the characteristic polynomial of the matrix $$C=A+uB=\begin{pmatrix}
    0&a+ub_1&ub_2\\ -(a+ub_1)&0&ub_3\\-ub_2&-ub_3&0
\end{pmatrix}$$
is given by $p(\lambda)= \lambda\left[\lambda^2+\beta^2\right];$
 thus, the eigenvalues of $C$ are given by $\lambda_1=0$ y $\lambda_{2,3}=\pm i\beta.$ The eigenvectors associated are determined considering the following cases: \vspace{0.1in}

\noindent\textbf{Case 1.} If $u=0:$ 
\begin{itemize}
    \item For $\lambda_1=0$ we have
\begin{equation*}
  \lambda_1I-C=\begin{pmatrix}
        0&-a&0\\a&0&0\\0&0&0
    \end{pmatrix}\sim \begin{pmatrix}
        1&0&0\\0&1&0\\0&0&0
    \end{pmatrix},
\end{equation*}
thus, $v_1:=(0,0,1)$ is the eigenvector associated with $\lambda_1.$

\item For $\lambda_2=i\beta=ia,$ we have
\begin{equation*}
    \lambda_2I-C=\begin{pmatrix}
        ia&-a&0\\a&ia&0\\0&0&ia
    \end{pmatrix}\sim  \begin{pmatrix}
        ia&-a&0\\0&0&ia\\0&0&0
    \end{pmatrix}
\end{equation*}
thus, $v=:v_2+iv_3=(-i,1,0)$ is the eigenvector associated with $\lambda_{2,3}.$ For the conjugate eigenvalue $\lambda_3=-i\beta$ a similar result is obtained.
\end{itemize}

\noindent Thus, the trajectories of the system $\mathbb{P}\Sigma$, for control $u=0,$ are given by
\begin{equation*} 
    s(t)=c_1 e^{\lambda_1t}v_1^{T} +c_2\left[\cos(at)v_2^T -\sin(at)v_3^T\right]+c_3\left[ \cos(at)v_3^T +\sin(at)v_2^T\right], 
\end{equation*} or more precisely given by
 \begin{equation*}
     s(t)=\begin{pmatrix}
        c_2\sin(at)-c_3\cos(at)\\c_2 \cos(at)+c_3\sin(at)\\c_1
    \end{pmatrix},\quad t\in \mathbb{R}.
 \end{equation*}

\begin{obs}\label{solucion}
Given $s_0=(a_0,b_0,c_0)\in \mathbb{S}^2;$ the trajectory of $\mathbb{P}\Sigma$, with control $u=0$ and initial condition $s_0,$ is given by
\begin{equation*}
s(t;s_0,0)=\begin{pmatrix}
        b_0\sin(at)+a_0\cos(at)\\b_0 \cos(at)-a_0\sin(at)\\c_0
\end{pmatrix},
\end{equation*}
which are circles on the sphere $ \mathbb{S}^2$ with constant height $z=c_0.$
\end{obs}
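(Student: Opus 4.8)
The plan is to specialize the general solution of $\mathbb{P}\Sigma$ for $u=0$, already obtained above, to the prescribed initial state $s_0=(a_0,b_0,c_0)$. Recall that for $u=0$ every trajectory has the form
\[
s(t)=\begin{pmatrix} c_2\sin(at)-c_3\cos(at)\\ c_2\cos(at)+c_3\sin(at)\\ c_1 \end{pmatrix},\qquad t\in\mathbb{R},
\]
with the constants $c_1,c_2,c_3$ fixed by the initial condition.

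First I would evaluate this expression at $t=0$, which gives $s(0)=(-c_3,\,c_2,\,c_1)$; imposing $s(0)=s_0$ therefore forces $c_1=c_0$, $c_2=b_0$ and $c_3=-a_0$. Substituting these values back and simplifying the trigonometric terms yields precisely the stated closed form for $s(t;s_0,0)$.

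Next I would check the consistency of the formula and justify the geometric description. For membership in $\mathbb{S}^2$ a one-line computation gives $\bigl(b_0\sin(at)+a_0\cos(at)\bigr)^2+\bigl(b_0\cos(at)-a_0\sin(at)\bigr)^2=a_0^2+b_0^2$, so the squared norm of $s(t;s_0,0)$ equals $a_0^2+b_0^2+c_0^2=1$ for all $t$; by uniqueness of solutions of the linear ODE $\dot s=As$ this curve is indeed \emph{the} trajectory through $s_0$. For the geometric picture, the third coordinate is constantly $c_0$, so the curve lies in the horizontal plane $\{z=c_0\}$, and its intersection with $\mathbb{S}^2$ is the parallel of radius $\sqrt{a_0^2+b_0^2}=\sqrt{1-c_0^2}$; since the first two coordinates are the rotation of $(a_0,b_0)$ through angle $at$, the trajectory sweeps out this entire latitude circle at constant angular speed $a$ (degenerating to a fixed pole exactly when $c_0=\pm1$).

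There is no genuine obstacle here: the only care needed is the sign bookkeeping that produces $c_3=-a_0$, and the appeal to uniqueness of solutions of $\dot s = As$ to upgrade ``a trajectory'' to ``the trajectory''. Alternatively, one could bypass the general-solution step entirely and prove the observation directly by differentiating the claimed curve, verifying $\dot s_1=as_2$, $\dot s_2=-as_1$, $\dot s_3=0$ together with $s(0)=s_0$.
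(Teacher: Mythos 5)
Your proposal is correct and follows essentially the same route as the paper: the observation is obtained by specializing the constants $c_1,c_2,c_3$ in the general $u=0$ solution derived in Case 1 to match the initial condition $s_0=(a_0,b_0,c_0)$, exactly as you do (with the correct sign $c_3=-a_0$). Your added checks (norm preservation, uniqueness of solutions of $\dot s=As$, and the direct-differentiation alternative) are sound but not needed beyond what the paper's derivation already provides.
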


\noindent\textbf{Case 2.} If $u\neq 0:$ 
\begin{itemize}
\item For $\lambda_1=0,$ we have
\begin{equation*}
     \lambda_1I-C\sim \begin{pmatrix}
        ub_2&ub_3&0\\0&a+ub_1&ub_2\\a+ub_1&0&-ub_3
    \end{pmatrix}\\
    \sim \begin{pmatrix}
        ub_2&ub_3&0\\0&a+ub_1&ub_2\\0&0&0
    \end{pmatrix}
\end{equation*}
thus, the eigenvectors are of the form $$\left(\dfrac{ub_3}{a+ub_1},-\dfrac{ub_2}{a+ub_1},1\right)t, \hspace{0.2in} t\in\mathbb{R}$$ when $a+ub_1\neq 0;$ taking $t=a+ub_1$ we have $v_1=(ub_3,-ub_2,a+ub_1).$ In the case $a+ub_1=0,$ a similar result is obtained.

\item For $\lambda_2=i\beta$ we have
\begin{equation*}
    \lambda_2 I- C\sim  \begin{pmatrix}
    i\beta &-(a+ub_1)& -ub_2\\0&i\dfrac{u^2\alpha}{\beta}&-ub_3-i\dfrac{(a+ub_1)ub_2}{\beta}\\0&0&0\end{pmatrix},
\end{equation*}
where $\alpha:=b_2^2+b_3^2;$ thus, if $\alpha\neq 0,$ the associated eigenvector to $\lambda_2$ is given by
$$v=v_2+iv_3=\left( -\dfrac{b_3(a+ub_1)}{u\alpha}-i\dfrac{b_2\beta}{u\alpha},\dfrac{b_2(a+ub_1)}{u\alpha}-i\dfrac{b_3\beta}{u\alpha}, 1 \right).$$ For the conjugate eigenvalue $\lambda_3=-i\beta$ a similar result is obtained.
\end{itemize}

\noindent Thus, the trajectories of the system $\mathbb{P}\Sigma$, for control $u\neq 0,$ are given by
$$s(t)=c_1 v_1^T+c_2(\cos(\beta t)v_2^T-\sin(\beta t)v_3^T)+c_3(\cos(\beta t)v_3^T+\sin(\beta t)v_2^T),$$
or more precisely given by
\begin{equation*}
 s(t)=\begin{pmatrix}
 c_1 b_3 u-\left[ \dfrac{b_3 c_2(a+ub_1)+b_2 c_3 \beta}{u\alpha}\right]\cos(\beta t)
    + \left[\dfrac{b_2 c_2\beta-b_3 c_3(a+ub_1)}{u\alpha}\right]\sin(\beta t) \\
 -c_1 b_2 u+\left[ \dfrac{b_2 c_2(a+ub_1)-b_3 c_3 \beta}{u\alpha}\right]\cos(\beta t)
    + \left[\dfrac{b_3 c_2\beta+b_2 c_3(a+ub_1)}{u\alpha}\right]\sin(\beta t) \\
c_1(a+ub_1)+c_2\cos(\beta t)+c_3\sin(\beta t) 
 \end{pmatrix}, 
\end{equation*}
with $t\in\R.$

\begin{obs}\label{solucion2} For controls $u\neq 0,$ it is possible to determine values for $c_1, c_2$ and $c_3$ such that $s(0)=s_0=(a_0,b_0,0)\in \mathbb{S}^2,$ for certain $a_0$ and $b_0.$ In fact, writing
\begin{equation*}
s(0)=\begin{pmatrix}
    c_1 b_3 u-\left[ \dfrac{b_3 c_2(a+ub_1)+b_2 c_3 \beta}{u\alpha}\right]\\
    -c_1 b_2 u+\left[ \dfrac{b_2 c_2(a+ub_1)-b_3 c_3 \beta}{u\alpha}\right]\\
    c_1(a+ub_1)+c_2
\end{pmatrix}    =\begin{pmatrix}
    a_0\\b_0\\0
\end{pmatrix},
\end{equation*}
we can take $a_0:=b_3/ \sqrt{\alpha}$ and $b_0:=-b_2/ \sqrt{\alpha},$ these satisfy $a_0b_2+b_0b_3=0$ and $a_0^2+b_0^2=1.$ Taking components we have
\begin{equation*}
    b_2 c_1 b_3 u-\dfrac{b_2 b_3 c_2 (a+ub_1)}{u\alpha}-\dfrac{b_2^2 c_3\beta }{u\alpha}=a_0 b_2 \hspace{0.2in}\mbox{and}\hspace{0.2in}-b_3 c_1 b_2  u+\dfrac{b_3 b_2 c_2 (a+ub_1)}{u\alpha}-\dfrac{b_3^2 c_3 \beta}{u\alpha}=b_0 b_3,  
\end{equation*}
therefore
\begin{equation*}
    -\dfrac{b_2^2 c_3\beta }{u\alpha}-\dfrac{b_3^2 c_3 \beta}{u\alpha}=-\dfrac{c_3\beta}{u}=0,
\end{equation*}
thus $c_3=0.$ On the other hand, replacing $c_2=-c_1(a+ub_1)$ in $$c_1 b_3 u-(b_3 c_2(a+ub_1))/(u\alpha)=a_0,$$ and using the definition of $\beta,$ we get
$c_1 b_3 \beta^2/u\alpha=a_0;$
therefore
\begin{equation*}
    c_1=\dfrac{a_0 u\alpha}{b_3 \beta^2} \hspace{0.3in}\mbox{and}\hspace{0.3in}
    c_2=-\dfrac{a_0 u\alpha(a+ub_1)}{b_3 \beta^2}.
\end{equation*}
Given $s_0=(a_0,b_0,0)\in \mathbb{S}^2;$ the trajectory of $\mathbb{P}\Sigma$, with control $u\neq 0$ and initial condition $s_0,$ is given by
\begin{equation*}
    s(t,s_0,u)=\begin{pmatrix}
        \dfrac{a_0 u^2\alpha}{\beta^2}+\dfrac{a_0(a+ub_1)^2}{\beta^2} \cos(\beta t)-\dfrac{a_0 b_2 (a+ub_1)}{b_3 \beta}\sin(\beta t)\\
        -\dfrac{a_0 b_2 u^2 \alpha}{b_3\beta^2}-\dfrac{a_0b_2 (a+ub_1)^2}{b_3 \beta^2}\cos(\beta t)-\dfrac{a_0(a+ub_1)}{\beta}\sin(\beta t)\\
        \dfrac{a_0 u \alpha(a+ub_1)}{b_3\beta^2}-\dfrac{a_0 u \alpha(a+ub_1)}{b_3 \beta^2}\cos(\beta t)
    \end{pmatrix}.
\end{equation*} Finally, writing $P_0^T=\left( \dfrac{a_0 u^2\alpha}{\beta^2}, -\dfrac{a_0 b_2 u \alpha}{b_3\beta^2}, \dfrac{a_0 u \alpha(a+ub_1)}{b_3\beta^2} \right),$ we have
\begin{equation*}
\|s(t;s_0,u)-P_0\| =\left|\dfrac{a_0(a+ub_1)}{b_3 \beta}\right|,
\end{equation*} \emph{i.e.,} the trajectory $s(t;s_0,u),$ with initial condition $s_0=(a_0,b_0,0)$ is a circle on the sphere $\mathbb{S}^2$ with center $P_0.$
\end{obs}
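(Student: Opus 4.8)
The assertion has two parts: that for every $u\neq0$ the constants $c_1,c_2,c_3$ in the general $u\neq0$ solution obtained above can be chosen so that the corresponding orbit of $\mathbb{P}\Sigma$ meets the circle $\{z=0\}\cap\mathbb{S}^2$ at time $t=0$; and that this orbit is then a circle on $\mathbb{S}^2$ with centre $P_0$. The plan is to argue directly from the explicit solution formula for $u\neq0$. (Conceptually, the fact that every orbit is a circle is automatic: $A+uB$ is skew-symmetric, so $\{e^{t(A+uB)}\}$ is a one-parameter group of rotations about the fixed axis $\ker(A+uB)$, spanned by $v_1$, and hence every orbit is a circle on $\mathbb{S}^2$ centred on that axis; the computation below only makes the centre and the radius explicit, which is the point of the descriptive approach taken here.)

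First I would set $t=0$ in the general solution, obtaining $s(0)=c_1v_1^{T}+c_2v_2^{T}+c_3v_3^{T}$; reading the third coordinate off the explicit $v_1,v_2,v_3$ gives $c_1(a+ub_1)+c_2$, so the condition $s(0)\in\{z=0\}$ forces $c_2=-c_1(a+ub_1)$, which I substitute into the two remaining coordinate equations. Next I would use the freedom in the target point $s_0=(a_0,b_0,0)$: since $[A,B]\neq0$ we have $\alpha:=b_2^2+b_3^2\neq0$ by Observation \ref{obcasosimetrico}, so I take $(a_0,b_0)=(b_3/\sqrt{\alpha},\,-b_2/\sqrt{\alpha})$, a unit vector with the orthogonality property $a_0b_2+b_0b_3=0$. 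Forming $b_2$ times the first reduced equation plus $b_3$ times the second, the $c_1$- and $c_2$-terms cancel thanks to this relation while the right-hand side equals $a_0b_2+b_0b_3=0$, so what remains is $-c_3\beta/u=0$, that is $c_3=0$. Substituting $c_3=0$ and $c_2=-c_1(a+ub_1)$ back into the first equation and simplifying with the identity $\beta^2=(a+ub_1)^2+u^2\alpha$ yields $c_1=a_0u\alpha/(b_3\beta^2)$, and then $c_2=-a_0u\alpha(a+ub_1)/(b_3\beta^2)$.

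Feeding these three constants back into the general solution and grouping the constant, $\cos(\beta t)$ and $\sin(\beta t)$ parts produces the displayed closed form for $s(t;s_0,u)$. For the geometric statement I would let $P_0$ denote the constant term of $s(t;s_0,u)$ and compute $\|s(t;s_0,u)-P_0\|$. Since $\mathbb{P}\Sigma$ evolves on $\mathbb{S}^2$, we have $\|s(t;s_0,u)\|\equiv1$, so it suffices to check that the oscillating part $\cos(\beta t)\,w_1+\sin(\beta t)\,w_2$ (where $w_1,w_2$ are the coefficient vectors of $\cos(\beta t)$ and $\sin(\beta t)$ in the closed form) is orthogonal to $P_0$: both inner products $\langle w_1,P_0\rangle$ and $\langle w_2,P_0\rangle$ collapse to $0$ after repeated use of $b_2^2+b_3^2=\alpha$ and $\beta^2=(a+ub_1)^2+u^2\alpha$. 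Then $\langle s(t;s_0,u),P_0\rangle$ is independent of $t$, so
\[
\|s(t;s_0,u)-P_0\|^2=\|s(t;s_0,u)\|^2-2\langle s(t;s_0,u),P_0\rangle+\|P_0\|^2=1-\|P_0\|^2
\]
is constant, and a curve lying on $\mathbb{S}^2$ at constant distance from a fixed point is a circle; the radius then follows by evaluating $1-\|P_0\|^2$.

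I expect the main obstacle to be computational stamina rather than anything conceptual: the general $u\neq0$ solution carries six unwieldy coefficients, and each simplification (the cancellation yielding $c_3=0$, the closed form, the vanishing of the two inner products, the value of $\|P_0\|^2$) hinges on recognizing over and over the relations $\alpha=b_2^2+b_3^2$ and $\beta^2=(a+ub_1)^2+u^2\alpha$. The one point genuinely needing separate attention is the degenerate subcase $b_3=0$: there $\alpha\neq0$ still forces $b_2\neq0$, and one should instead take $(a_0,b_0)=(0,\pm1)$ (which also satisfies $a_0b_2+b_0b_3=0$) and rerun the same steps with $b_2$ in the role of $b_3$, so that no division by $b_3$ occurs; alternatively, since a rotation about the $z$-axis preserves both the normal form of $A$ and the circle $\{z=0\}\cap\mathbb{S}^2$, one may simply assume $b_3\neq0$ from the start.
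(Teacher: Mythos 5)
Your proposal is correct and follows essentially the same route as the paper: impose $s(0)=(a_0,b_0,0)$ with $a_0=b_3/\sqrt{\alpha}$, $b_0=-b_2/\sqrt{\alpha}$, use the combination $b_2\cdot(\text{first equation})+b_3\cdot(\text{second equation})$ to force $c_3=0$, read $c_2=-c_1(a+ub_1)$ off the third component, solve for $c_1$ via $\beta^2=(a+ub_1)^2+u^2\alpha$, and substitute back to get the closed form. Two points of difference are worth recording. First, for the circle property the paper computes $\|s(t;s_0,u)-P_0\|$ directly from the oscillating part, while you deduce constancy of the distance from $\langle s(t),P_0\rangle=\|P_0\|^2$ together with $\|s(t)\|\equiv 1$; the two arguments are equivalent in content, but yours makes the radius $\sqrt{1-\|P_0\|^2}$ transparent, and carrying it out gives radius $|a+ub_1|/\beta=|a_0(a+ub_1)|\sqrt{\alpha}/(|b_3|\beta)$, which shows the radius displayed in the paper is missing a factor $\sqrt{\alpha}$ (harmless for the controllability argument, which only uses the circle property and the values $s(T)$, but your route would catch it). Second, your explicit treatment of the degenerate case $b_3=0$ (where $a_0=0$ and the paper's formulas $c_1=a_0u\alpha/(b_3\beta^2)$, $c_2=-a_0u\alpha(a+ub_1)/(b_3\beta^2)$ involve a division by zero) is a genuine improvement: since $\alpha\neq 0$ forces $b_2\neq 0$ there, solving for $c_1$ from the second component, or normalizing by a rotation about the $z$-axis so that $b_3\neq 0$, closes a gap the paper passes over silently.
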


The main result of this paper is the following theorem.

\begin{teorema}\label{teocontrol}
The induced bilinear system $\mathbb{P}\Sigma: \dot{s}=As+uBs,\  s\in\mathbb{S}^2,$ given in \eqref{sistemajordan1}, is controllable on the sphere $\mathbb{S}^2$ if $[A,B]\neq 0.$
\end{teorema}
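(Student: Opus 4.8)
The plan is to prove that the reachable set $\mathcal{R}(s_0)$ of a single explicit equator point $s_0$ is all of $\mathbb{S}^2$, and then to upgrade this to controllability by a reversibility argument. By Observation \ref{obcasosimetrico} we may take $\mathbb{P}\Sigma$ in the form \eqref{sistemajordan1}, so that $[A,B]\neq 0$ means exactly $\alpha:=b_2^2+b_3^2>0$; set $s_0:=\frac{1}{\sqrt\alpha}(b_3,-b_2,0)\in\mathbb{S}^2$, the point used in Observation \ref{solucion2}. The first step is reversibility: for every $u\in\R$ the matrix $A+uB$ is skew-symmetric, so $e^{t(A+uB)}$ is the rotation of $\mathbb{S}^2$ about the axis $\R v_1(u)$, with $v_1(u)=(ub_3,-ub_2,a+ub_1)$, through the angle $\beta(u)t$, and $\beta(u)>0$ for all $u$; hence each flow is periodic in $t$ and, by periodicity, $e^{-t(A+uB)}=e^{t'(A+uB)}$ for some $t'\ge 0$. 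Therefore any concatenated trajectory can be run backwards, so $q\in\mathcal{R}(p)\iff p\in\mathcal{R}(q)$; since $\mathcal{R}$ is also reflexive and transitive it is an equivalence relation, and it suffices to show $\mathcal{R}(s_0)=\mathbb{S}^2$.

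For the second step, recall from Observation \ref{solucion} that $u=0$ carries any point of height $h$ once around the whole horizontal circle $\{z=h\}$, so $\mathcal{R}(s_0)$ is a union of horizontal circles and equals $\mathbb{S}^2$ as soon as it contains a point of every height $h\in[-1,1]$. To produce such points I would first slide $s_0$ along the equator (using $u=0$) to a suitable point $p_+=(x_+,y_+,0)$, and then switch on a constant control $u_+\neq 0$. The orbit of $p_+$ under $e^{t(A+u_+B)}$ is the circle $\{v\in\mathbb{S}^2:\ v\cdot v_1(u_+)=p_+\cdot v_1(u_+)\}$, and one checks that it passes through the north pole $N=(0,0,1)$ exactly when $a=u_+(x_+b_3-y_+b_2-b_1)$. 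As $p_+$ ranges over the equator, $x_+b_3-y_+b_2$ ranges over $[-\sqrt\alpha,\sqrt\alpha]$, hence $x_+b_3-y_+b_2-b_1$ takes some nonzero value $c$; taking that $p_+$ and $u_+:=a/c\neq 0$, the trajectory is a circle through $p_+$ (height $0$) and through $N$ (height $1$), so it meets every height in $[0,1]$. A mirror-image choice $p_-,u_-$, with the orbit forced through the south pole, covers $[-1,0]$. Prefixing the transfers $s_0\to p_\pm$ and postfixing further $u=0$ rotations then places every horizontal circle inside $\mathcal{R}(s_0)$, so $\mathcal{R}(s_0)=\mathbb{S}^2$; the explicit transfer between arbitrary $p$ and $q$ is obtained by concatenating the trajectory from $p$ to $s_0$ (a reversed $s_0\to p$ trajectory), then $s_0\to p_\pm$, then the arc to the point at the same height as $q$, then the final $u=0$ arc to $q$ — each leg one of the circles written out in Observations \ref{solucion} and \ref{solucion2}.

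The step I expect to be the main obstacle is this last one: making the claim ``$\mathcal{R}(s_0)$ meets every height'' hold uniformly in the parameters $a>0$ and $b_1,b_2,b_3$. From the coordinate formulas of Observation \ref{solucion2}, the $z$-coordinate of the trajectory from $s_0$ with control $u$ is $\tfrac{1}{2}M(u)(1-\cos\beta t)$ with $M(u)=2u\sqrt\alpha(a+ub_1)/\beta(u)^2$; one always has $|M(u)|\le 1$, but the equality $|M(u)|=1$ that pins a trajectory to a pole is available only when the rotation axis $v_1(u)$ can be tilted far enough from the vertical. This is automatic when $b_1=0$ (take $u=\pm a/\sqrt\alpha$, where $M(\pm a/\sqrt\alpha)=\pm1$), but in the borderline ranges $|b_1|\ge\sqrt\alpha$ one must first move the starting point off the position where $v_1(u_+)$ would fix it — which is precisely why the auxiliary equator point $p_+$ is introduced above. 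Once this case analysis is organized, the rest — reversing trajectories, covering the horizontal circles, and assembling the concatenation — is routine bookkeeping.
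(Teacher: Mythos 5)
Your argument is correct, and it shares the paper's geometric skeleton (reach a pole along the circular orbit of a constant nonzero control started on the equator, move at fixed height with $u=0$, concatenate), but it executes this in two genuinely different ways. First, you reduce everything to the single claim $\mathcal{R}(s_0)=\mathbb{S}^2$ via reversibility: since $\beta(u)^2=(a+ub_1)^2+u^2\alpha>0$ for every $u$, each flow is a periodic rotation and any trajectory can be undone in forward time, so reachability is an equivalence relation. The paper does not use this; it argues directly with a case analysis on $\pi_3(\tilde s_0)$ and $\pi_3(\tilde s_1)$ (cases i--iii of its proof), which is more descriptive but leaves several sign configurations to implicit symmetry, whereas your reduction absorbs them all at once. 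Second, where the paper fixes the starting point of Observation \ref{solucion2} and solves $u^2\alpha=(a+ub_1)^2$ for the control, forcing its split into the cases $\sqrt{\alpha}-b_1\neq 0$ and $\sqrt{\alpha}-b_1=0$, you leave the equatorial start $p_+$ free and impose the incidence condition $N\cdot v_1(u_+)=p_+\cdot v_1(u_+)$, i.e. $a=u_+(x_+b_3-y_+b_2-b_1)$, which is solvable uniformly in the parameters because $x_+b_3-y_+b_2$ sweeps the interval $[-\sqrt{\alpha},\sqrt{\alpha}]$ of positive length; this eliminates the paper's case split and most of its explicit trajectory computation, at the price of invoking the axis--orbit description of the rotation rather than the closed-form solutions. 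Two small points to tighten: (i) state explicitly that the level set $\{v\in\mathbb{S}^2:\ v\cdot v_1(u_+)=p_+\cdot v_1(u_+)\}$ is a genuine circle and that the forward orbit of $p_+$ is all of it --- immediate because the set contains the two distinct points $p_+$ and $N$, so $p_+$ is not on the rotation axis and the positive angular speed $\beta(u_+)$ carries it around the whole circle; (ii) your closing remark that the fixed starting point fails for all $|b_1|\ge\sqrt{\alpha}$ overstates the obstruction: from the point of Observation \ref{solucion2} the north pole is reached whenever $b_1\neq\sqrt{\alpha}$ (the paper's Case 1), and only the exact equalities $b_1=\pm\sqrt{\alpha}$ block one of the two poles (repaired by the paper's Case 2); the inaccuracy is harmless, since your choice of $p_\pm$ covers every case anyway.
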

\begin{proof} We suppose that $\alpha=b_2^2+b_3^2\neq 0$ and consider the following cases:
\vspace{0.1in}

\noindent\textbf{Case 1.} If $\sqrt{\alpha}-b_1\neq 0;$ consider the control $u:=a/(\sqrt{\alpha}-b_1)$ such that $u^2\alpha =(a+ub_1)^2.$ Writing $s(t,s_0,u)=(s_1(t),s_2(t),s_3(t)),$ where $s_0=(a_0,b_0,0)$ is given in Observation \ref{solucion2}; taking $T:=\pi/ \beta,$ we easily get
\begin{equation*}
\begin{cases}
s_1(T)=\dfrac{a_0 u^2\alpha}{\beta^2}-\dfrac{a_0(a+ub_1)^2}{\beta^2}
=\dfrac{a_0 (a+ub_1)^2}{\beta^2}-\dfrac{a_0 (a+ub_1)^2}{\beta^2}
= 0; \\
s_2(T)=-\dfrac{a_0 b_2 u^2 \alpha}{b_3\beta^2}+\dfrac{a_0b_2 (a+ub_1)^2}{b_3 \beta^2}
     =-\dfrac{a_0 b_2 (a+ub_1)^2}{b_3\beta^2}+\dfrac{a_0b_2 (a+ub_1)^2}{b_3 \beta^2}
     =0; \\
s_3(T)
    =\dfrac{2a_0 u \alpha(a+ub_1)}{b_3 \beta^2}
    =2\dfrac{b_3}{\sqrt{\alpha}}\dfrac{a+ub_1}{\sqrt{\alpha}}\dfrac{\alpha(a+ub_1)}{b_3 \beta^2}
    =\dfrac{2\alpha u^2}{(a+ub_1)^2+u^2b_2^2+u^2b_3^2}=1;  
\end{cases}
\end{equation*}
\emph{i.e.,} there exist $T>0$ and $u$ such that $s(T;s_0,u)=(0,0,1).$ On the other hand, taking the initial condition $s_0$ with $a_0:=-b_3/\sqrt{\alpha}$ and $b_0:=b_2/\sqrt{\alpha},$ with the same control $u$ in $T:=\pi/ \beta,$ we get $s(T;s_0,u)=(0,0,-1).$ \vspace{0.1in}

\noindent\textbf{Case 2.} If $\sqrt{\alpha}-b_1=0;$ consider the control $ u:=-a/2\sqrt{\alpha}$ such that $u^2\alpha=(a+ub_1)^2.$ Therefore, taking $T:=\pi/\beta$ we get $s_1(T)=s_2(T)=0$ and
\begin{align*}
    s_3(T)&=\dfrac{2u \sqrt{\alpha}(a+u\sqrt{\alpha})}{\beta^2}
    =2\,\left(-\dfrac{a}{2\sqrt{\alpha}}\right)\dfrac{\sqrt{\alpha}(a+u\sqrt{\alpha})}{\beta^2}
    =-\dfrac{a(a+u\sqrt{\alpha})}{(a+u\sqrt{\alpha})^2+u^2\alpha}\\
          &=-\dfrac{a\left(a-\frac{a}{2}\right)}{\left(a-\frac{a}{2}\right)^2+\frac{a^2}{4}}=-1;
\end{align*}
\emph{i.e.,} there exist $T>0$ and $u$ such that $s(T;s_0,u)=(0,0,-1).$ Again, we can find an initial condition $s_0,$ with the same control $u$ and $T,$ such that $s(T;s_0,u)=(0,0,1).$ 

\noindent Let $\pi_3$ be the projection of the third coordinate of $\mathbb{R}^3.$ We obtain the controllability of $\mathbb{P}\Sigma$ as follows: given $\tilde{s_0}$
and $\tilde{s_1}$ in $\mathbb{S}^2,$ consider the following cases.
\vspace{0.1in}

\noindent \textbf{i.} $\pi_3(\tilde{s_0})=\pi_3(\tilde{s_1})=0:$ using Observation \ref{solucion}, consider the trajectory $s(t,\tilde{s_0},0),$ with initial condition $\tilde{s_0}$ and $u=0;$ since $\pi_3(\tilde{s_1})=0,$ for some $T>0,$  $s(T,\tilde{s_0},0)=\tilde{s_1}.$ See Figure 1. 

\begin{multicols}{2}

\tikzset{every picture/.style={line width=0.75pt}} 

\begin{tikzpicture}[x=0.75pt,y=0.75pt,yscale=-1,xscale=1]

\draw   (99,117.2) .. controls (99,62.41) and (143.41,18) .. (198.2,18) .. controls (252.98,18) and (297.4,62.41) .. (297.4,117.2) .. controls (297.4,171.98) and (252.98,216.4) .. (198.2,216.4) .. controls (143.41,216.4) and (99,171.98) .. (99,117.2) -- cycle ;
\draw  [dash pattern={on 0.84pt off 2.51pt}] (99.02,115.09) .. controls (99.27,104.04) and (143.87,96.08) .. (198.65,97.31) .. controls (253.42,98.53) and (297.62,108.48) .. (297.37,119.52) .. controls (297.13,130.56) and (252.52,138.52) .. (197.75,137.3) .. controls (142.98,136.07) and (98.78,126.13) .. (99.02,115.09) -- cycle ;
\draw    (242.11,135.56) ;
\draw [shift={(242.11,135.56)}, rotate = 0] [color={rgb, 255:red, 0; green, 0; blue, 0 }  ][fill={rgb, 255:red, 0; green, 0; blue, 0 }  ][line width=0.75]      (0, 0) circle [x radius= 3.35, y radius= 3.35]   ;
\draw    (146.11,133.56) ;
\draw [shift={(146.11,133.56)}, rotate = 0] [color={rgb, 255:red, 0; green, 0; blue, 0 }  ][fill={rgb, 255:red, 0; green, 0; blue, 0 }  ][line width=0.75]      (0, 0) circle [x radius= 3.35, y radius= 3.35]   ;
\draw   (189.56,132.59) -- (200.1,137.82) -- (188.95,141.57) ;
\draw  [draw opacity=0] (242.69,136.14) .. controls (229.18,137.22) and (213.91,137.66) .. (197.75,137.3) .. controls (178.85,136.88) and (161.2,135.41) .. (146.21,133.23) -- (198.2,117.3) -- cycle ; \draw   (242.69,136.14) .. controls (229.18,137.22) and (213.91,137.66) .. (197.75,137.3) .. controls (178.85,136.88) and (161.2,135.41) .. (146.21,133.23) ;  

\draw (135.4,144.39) node [anchor=north west][inner sep=0.75pt]  [rotate=-356.45,xslant=0.06]  {$\widetilde{s_{0}}$};
\draw (236,142.4) node [anchor=north west][inner sep=0.75pt]    {$\widetilde{s_{1}} =s\left( T,\widetilde{s_{0}} ,0\right)$};
\draw (173,223) node [anchor=north west][inner sep=0.75pt]   [align=left] {Figure 1};
\end{tikzpicture}

\tikzset{every picture/.style={line width=0.75pt}} 
\begin{tikzpicture}[x=0.75pt,y=0.75pt,yscale=-1,xscale=1]

\draw   (119,137.3) .. controls (119,82.52) and (163.41,38.1) .. (218.2,38.1) .. controls (272.98,38.1) and (317.4,82.52) .. (317.4,137.3) .. controls (317.4,192.09) and (272.98,236.5) .. (218.2,236.5) .. controls (163.41,236.5) and (119,192.09) .. (119,137.3) -- cycle ;
\draw  [dash pattern={on 0.84pt off 2.51pt}] (119.02,135.09) .. controls (119.27,124.04) and (163.87,116.08) .. (218.65,117.31) .. controls (273.42,118.53) and (317.62,128.48) .. (317.37,139.52) .. controls (317.13,150.56) and (272.52,158.52) .. (217.75,157.3) .. controls (162.98,156.07) and (118.78,146.13) .. (119.02,135.09) -- cycle ;
\draw    (171.11,81.56) ;
\draw [shift={(171.11,81.56)}, rotate = 0] [color={rgb, 255:red, 0; green, 0; blue, 0 }  ][fill={rgb, 255:red, 0; green, 0; blue, 0 }  ][line width=0.75]      (0, 0) circle [x radius= 3.35, y radius= 3.35]   ;
\draw    (166.11,153.56) ;
\draw [shift={(166.11,153.56)}, rotate = 0] [color={rgb, 255:red, 0; green, 0; blue, 0 }  ][fill={rgb, 255:red, 0; green, 0; blue, 0 }  ][line width=0.75]      (0, 0) circle [x radius= 3.35, y radius= 3.35]   ;
\draw  [dash pattern={on 0.84pt off 2.51pt}] (147.13,70.35) .. controls (147.27,63.77) and (180.05,59.17) .. (220.34,60.07) .. controls (260.63,60.97) and (293.18,67.03) .. (293.03,73.61) .. controls (292.88,80.18) and (260.1,84.78) .. (219.81,83.88) .. controls (179.52,82.98) and (146.98,76.92) .. (147.13,70.35) -- cycle ;
\draw    (236.11,59.56) ;
\draw [shift={(236.11,59.56)}, rotate = 0] [color={rgb, 255:red, 0; green, 0; blue, 0 }  ][fill={rgb, 255:red, 0; green, 0; blue, 0 }  ][line width=0.75]      (0, 0) circle [x radius= 3.35, y radius= 3.35]   ;
\draw    (247.11,156.56) ;
\draw [shift={(247.11,156.56)}, rotate = 0] [color={rgb, 255:red, 0; green, 0; blue, 0 }  ][fill={rgb, 255:red, 0; green, 0; blue, 0 }  ][line width=0.75]      (0, 0) circle [x radius= 3.35, y radius= 3.35]   ;
\draw  [draw opacity=0] (171.7,79.84) .. controls (156.56,77.32) and (147.05,73.91) .. (147.13,70.35) .. controls (147.27,63.77) and (180.05,59.17) .. (220.34,60.07) .. controls (225.76,60.19) and (231.04,60.41) .. (236.11,60.7) -- (220.08,71.98) -- cycle ; \draw   (171.7,79.84) .. controls (156.56,77.32) and (147.05,73.91) .. (147.13,70.35) .. controls (147.27,63.77) and (180.05,59.17) .. (220.34,60.07) .. controls (225.76,60.19) and (231.04,60.41) .. (236.11,60.7) ;  
\draw  [draw opacity=0] (247.2,157.06) .. controls (237.89,157.43) and (228,157.53) .. (217.75,157.3) .. controls (198.97,156.88) and (181.42,155.43) .. (166.49,153.27) -- (218.2,137.3) -- cycle ; \draw   (247.2,157.06) .. controls (237.89,157.43) and (228,157.53) .. (217.75,157.3) .. controls (198.97,156.88) and (181.42,155.43) .. (166.49,153.27) ;  
\draw  [dash pattern={on 0.84pt off 2.51pt}] (215.5,39.22) .. controls (224.22,35.62) and (247.01,70.74) .. (266.39,117.67) .. controls (285.77,164.61) and (294.41,205.57) .. (285.69,209.17) .. controls (276.97,212.77) and (254.19,177.65) .. (234.81,130.72) .. controls (215.43,83.78) and (206.78,42.82) .. (215.5,39.22) -- cycle ;
\draw  [draw opacity=0] (236.71,59.13) .. controls (245.97,73.34) and (256.61,94) .. (266.39,117.67) .. controls (285.77,164.61) and (294.41,205.57) .. (285.69,209.17) .. controls (278.69,212.07) and (262.61,189.97) .. (246.51,156.84) -- (250.6,124.19) -- cycle ; \draw   (236.71,59.13) .. controls (245.97,73.34) and (256.61,94) .. (266.39,117.67) .. controls (285.77,164.61) and (294.41,205.57) .. (285.69,209.17) .. controls (278.69,212.07) and (262.61,189.97) .. (246.51,156.84) ;  
\draw   (257.09,110.16) -- (257.91,98.33) -- (265.54,107.41) ;
\draw   (270.13,189.5) -- (272.75,201.07) -- (262.82,194.57) ;
\draw   (194.15,64.95) -- (183.11,60.61) -- (194.07,56.06) ;
\draw   (199.75,151.4) -- (210.08,157.03) -- (198.79,160.35) ;

\draw (155.4,164.39) node [anchor=north west][inner sep=0.75pt]  [rotate=-356.45,xslant=0.06]  {$\widetilde{s_{0}}$};
\draw (187,167.4) node [anchor=north west][inner sep=0.75pt]    {$s\left( t,\widetilde{s_{0}} ,0\right)$};
\draw (187,240) node [anchor=north west][inner sep=0.75pt]   [align=left] {Figure 2};
\draw (148.13,80.63) node [anchor=north west][inner sep=0.75pt]  [rotate=-356.45,xslant=0.06]  {$\widetilde{s_{1}}$};
\draw (222.2,135.59) node [anchor=north west][inner sep=0.75pt]  [rotate=-356.45,xslant=0.06]  {$P_{0}$};
\draw (235.4,41.39) node [anchor=north west][inner sep=0.75pt]  [rotate=-356.45,xslant=0.06]  {$P_{1}$};
\draw (269,96.4) node [anchor=north west][inner sep=0.75pt]    {$s( t,P_{0} ,u)$};
\draw (81,53.4) node [anchor=north west][inner sep=0.75pt]    {$s( t,P_{1} ,0)$};
\end{tikzpicture}
\end{multicols}

\noindent \textbf{ii.} $\pi_3(\tilde{s_0})=0$ and $\pi_3(\tilde{s_1})>0:$ consider the trajectory $s(t,\tilde{s_0},0),$ with initial condition $\tilde{s_0}$ and control $u=0;$ there exists $T_1>0$ such that $s(T_1,\tilde{s_0},0)=P_0,$ where $P_0$ is such that for some $u\neq 0$ and $T>0$ satisfies $s(T,P_0,u)=(0,0,1).$ The trajectory $s(t,P_0,u),$ at some time $T_2$ intersects, at a point $P_1,$ the trajectory $s(t, \tilde{s_1},0),$ with initial condition $ \tilde{s_1}$ and control $u=0;$ finally, at some time $T_3>0,$ starting from $P_1,$ we have that $s(T_3,P_1,0)=\tilde{s_1}.$ See Figure 2.

\vspace{0.1in}

\noindent \textbf{iii.} $\pi_3(\tilde{s_0})<0$ and $\pi_3(\tilde{s_1})>0:$ consider $P_1$ and $P_2,$ and the controls $u_1\neq 0$ and $u_2\neq 0$ such that, at some times $t_1$ and $t_2,$ we have that $s(t_1,P_1,u_1)=(0,0,-1)$ and $s(t_2,P_2,u_2)=(0,0,1).$ The trajectory $s(t,\tilde{s_0},0)$ intersects $s(t,P_1,u_1)$ at a point $P_0$ and at some $T_0;$ thus, the exists a time $T_1$ such that $s(T_1,P_0,u_1)=P_1;$ therefore, there exists $T_2$ such that $s(T_2,P_1,0)=P_2.$ The trajectory $s(t,P_2,u_2)$ find the trajectory $s(t,\tilde{s_1},0)$ at a point $P_3$ at time $T_3;$ finally, there exists $T_4$ such that $s(T_4,P_3,0)=\tilde{s_1}.$ See Figure 3.

\vspace{0.2in}

\begin{center}

\tikzset{every picture/.style={line width=0.75pt}} 

\begin{tikzpicture}[x=0.75pt,y=0.75pt,yscale=-1,xscale=1]

\draw   (139,139.19) .. controls (139,84.41) and (183.41,39.99) .. (238.2,39.99) .. controls (292.98,39.99) and (337.4,84.41) .. (337.4,139.19) .. controls (337.4,193.98) and (292.98,238.39) .. (238.2,238.39) .. controls (183.41,238.39) and (139,193.98) .. (139,139.19) -- cycle ;
\draw  [dash pattern={on 0.84pt off 2.51pt}] (139.02,136.98) .. controls (139.27,125.93) and (183.87,117.97) .. (238.65,119.2) .. controls (293.42,120.42) and (337.62,130.36) .. (337.37,141.41) .. controls (337.13,152.45) and (292.52,160.41) .. (237.75,159.19) .. controls (182.98,157.96) and (138.78,148.02) .. (139.02,136.98) -- cycle ;
\draw    (191.11,83.44) ;
\draw [shift={(191.11,83.44)}, rotate = 0] [color={rgb, 255:red, 0; green, 0; blue, 0 }  ][fill={rgb, 255:red, 0; green, 0; blue, 0 }  ][line width=0.75]      (0, 0) circle [x radius= 3.35, y radius= 3.35]   ;
\draw    (181.29,121.59) ;
\draw [shift={(181.29,121.59)}, rotate = 0] [color={rgb, 255:red, 0; green, 0; blue, 0 }  ][fill={rgb, 255:red, 0; green, 0; blue, 0 }  ][line width=0.75]      (0, 0) circle [x radius= 3.35, y radius= 3.35]   ;
\draw  [dash pattern={on 0.84pt off 2.51pt}] (167.13,72.24) .. controls (167.27,65.66) and (200.05,61.06) .. (240.34,61.96) .. controls (280.63,62.86) and (313.18,68.92) .. (313.03,75.5) .. controls (312.88,82.07) and (280.1,86.67) .. (239.81,85.77) .. controls (199.52,84.87) and (166.98,78.81) .. (167.13,72.24) -- cycle ;
\draw    (256.11,61.44) ;
\draw [shift={(256.11,61.44)}, rotate = 0] [color={rgb, 255:red, 0; green, 0; blue, 0 }  ][fill={rgb, 255:red, 0; green, 0; blue, 0 }  ][line width=0.75]      (0, 0) circle [x radius= 3.35, y radius= 3.35]   ;
\draw    (267.11,158.44) ;
\draw [shift={(267.11,158.44)}, rotate = 0] [color={rgb, 255:red, 0; green, 0; blue, 0 }  ][fill={rgb, 255:red, 0; green, 0; blue, 0 }  ][line width=0.75]      (0, 0) circle [x radius= 3.35, y radius= 3.35]   ;
\draw  [draw opacity=0] (191.7,81.73) .. controls (176.56,79.21) and (167.05,75.8) .. (167.13,72.24) .. controls (167.27,65.66) and (200.05,61.06) .. (240.34,61.96) .. controls (245.76,62.08) and (251.04,62.3) .. (256.11,62.59) -- (240.08,73.87) -- cycle ; \draw   (191.7,81.73) .. controls (176.56,79.21) and (167.05,75.8) .. (167.13,72.24) .. controls (167.27,65.66) and (200.05,61.06) .. (240.34,61.96) .. controls (245.76,62.08) and (251.04,62.3) .. (256.11,62.59) ;  
\draw  [draw opacity=0] (267.2,158.95) .. controls (257.89,159.32) and (248,159.42) .. (237.75,159.19) .. controls (182.98,157.96) and (138.78,148.02) .. (139.02,136.98) .. controls (139.18,130.23) and (155.86,124.64) .. (181.29,121.59) -- (238.2,139.19) -- cycle ; \draw   (267.2,158.95) .. controls (257.89,159.32) and (248,159.42) .. (237.75,159.19) .. controls (182.98,157.96) and (138.78,148.02) .. (139.02,136.98) .. controls (139.18,130.23) and (155.86,124.64) .. (181.29,121.59) ;  
\draw  [dash pattern={on 0.84pt off 2.51pt}] (235.5,41.11) .. controls (244.22,37.5) and (267.01,72.63) .. (286.39,119.56) .. controls (305.77,166.49) and (314.41,207.46) .. (305.69,211.06) .. controls (296.97,214.66) and (274.19,179.54) .. (254.81,132.6) .. controls (235.43,85.67) and (226.78,44.71) .. (235.5,41.11) -- cycle ;
\draw  [draw opacity=0] (256.71,61.01) .. controls (265.97,75.22) and (276.61,95.89) .. (286.39,119.56) .. controls (305.77,166.49) and (314.41,207.46) .. (305.69,211.06) .. controls (298.69,213.95) and (282.61,191.86) .. (266.51,158.73) -- (270.6,126.08) -- cycle ; \draw   (256.71,61.01) .. controls (265.97,75.22) and (276.61,95.89) .. (286.39,119.56) .. controls (305.77,166.49) and (314.41,207.46) .. (305.69,211.06) .. controls (298.69,213.95) and (282.61,191.86) .. (266.51,158.73) ;  
\draw   (277.09,112.05) -- (277.91,100.22) -- (285.54,109.29) ;
\draw   (290.13,191.39) -- (292.75,202.96) -- (282.82,196.46) ;
\draw   (214.15,66.83) -- (203.11,62.5) -- (214.07,57.95) ;
\draw   (219.75,153.29) -- (230.08,158.92) -- (218.79,162.24) ;
\draw  [dash pattern={on 0.84pt off 2.51pt}] (174.01,213.89) .. controls (174.13,208.37) and (201.85,204.52) .. (235.91,205.28) .. controls (269.98,206.04) and (297.49,211.13) .. (297.37,216.64) .. controls (297.25,222.16) and (269.53,226.01) .. (235.47,225.25) .. controls (201.4,224.49) and (173.88,219.4) .. (174.01,213.89) -- cycle ;
\draw  [dash pattern={on 0.84pt off 2.51pt}] (144.81,105.61) .. controls (151.36,93.75) and (178.03,113.06) .. (204.36,148.74) .. controls (230.69,184.42) and (246.73,222.96) .. (240.17,234.82) .. controls (233.62,246.68) and (206.96,227.37) .. (180.62,191.69) .. controls (154.29,156.01) and (138.25,117.47) .. (144.81,105.61) -- cycle ;
\draw    (235.91,205.28) ;
\draw [shift={(235.91,205.28)}, rotate = 0] [color={rgb, 255:red, 0; green, 0; blue, 0 }  ][fill={rgb, 255:red, 0; green, 0; blue, 0 }  ][line width=0.75]      (0, 0) circle [x radius= 3.35, y radius= 3.35]   ;
\draw    (209.11,222.56) ;
\draw [shift={(209.11,222.56)}, rotate = 0] [color={rgb, 255:red, 0; green, 0; blue, 0 }  ][fill={rgb, 255:red, 0; green, 0; blue, 0 }  ][line width=0.75]      (0, 0) circle [x radius= 3.35, y radius= 3.35]   ;
\draw  [draw opacity=0] (182.17,122.5) .. controls (189.28,129.68) and (196.83,138.54) .. (204.36,148.74) .. controls (219.21,168.86) and (230.78,189.89) .. (236.88,206.53) -- (192.49,170.21) -- cycle ; \draw   (182.17,122.5) .. controls (189.28,129.68) and (196.83,138.54) .. (204.36,148.74) .. controls (219.21,168.86) and (230.78,189.89) .. (236.88,206.53) ;  
\draw  [draw opacity=0] (236.38,205.29) .. controls (270.23,206.09) and (297.49,211.15) .. (297.37,216.64) .. controls (297.25,222.16) and (269.53,226.01) .. (235.47,225.25) .. controls (225.6,225.03) and (216.29,224.45) .. (208.03,223.59) -- (235.69,215.26) -- cycle ; \draw   (236.38,205.29) .. controls (270.23,206.09) and (297.49,211.15) .. (297.37,216.64) .. controls (297.25,222.16) and (269.53,226.01) .. (235.47,225.25) .. controls (225.6,225.03) and (216.29,224.45) .. (208.03,223.59) ;  
\draw   (246.37,220.69) -- (257.11,225.49) -- (246.11,229.69) ;
\draw   (220.8,182.34) -- (218.76,170.75) -- (228.39,177.51) ;
\draw   (164.34,130.58) -- (153.12,126.74) -- (163.86,121.71) ;

\draw (189.4,194.28) node [anchor=north west][inner sep=0.75pt]  [rotate=-356.45,xslant=0.06]  {$\widetilde{s_{0}}$};
\draw (342,134.29) node [anchor=north west][inner sep=0.75pt]    {$s( t,P_{1} ,0)$};
\draw (209,246.89) node [anchor=north west][inner sep=0.75pt]   [align=left] {Figure 3};
\draw (197.13,72.52) node [anchor=north west][inner sep=0.75pt]  [rotate=-356.45,xslant=0.06]  {$\widetilde{s_{1}}$};
\draw (242.2,137.47) node [anchor=north west][inner sep=0.75pt]  [rotate=-356.45,xslant=0.06]  {$P_{2}$};
\draw (255.4,43.28) node [anchor=north west][inner sep=0.75pt]  [rotate=-356.45,xslant=0.06]  {$P_{3}$};
\draw (291,107.29) node [anchor=north west][inner sep=0.75pt]    {$s( t,P_{2} ,u_{2})$};
\draw (95,53.29) node [anchor=north west][inner sep=0.75pt]    {$s( t,P_{3} ,0)$};
\draw (180.2,98.47) node [anchor=north west][inner sep=0.75pt]  [rotate=-356.45,xslant=0.06]  {$P_{1}$};
\draw (236.2,183.47) node [anchor=north west][inner sep=0.75pt]  [rotate=-356.45,xslant=0.06]  {$P_{0}$};
\draw (68,93.29) node [anchor=north west][inner sep=0.75pt]    {$s( t,P_{0} ,u_{1})$};
\draw (103,202.29) node [anchor=north west][inner sep=0.75pt]    {$s\left( t,\widetilde{s_{0}} ,0\right)$};
\end{tikzpicture}
\end{center}
At every case, there exists a time $T^*>0$ and a control $u^*$ such that $s(T^*,\tilde{s_0},u^*)=\tilde{s_1}.$
\end{proof}

In the following example we make explicit the technique used to ensure the controllability of bilinear systems induced on the sphere $\mathbb{S}^2.$

\begin{ejemplo}
We consider the following induced bilinear system 
\begin{equation*}
\dot{s}=As+uBs=\begin{pmatrix}
            0&1&0\\ -1 &0&0\\0&0&0
        \end{pmatrix}s+u \begin{pmatrix}
            0&0&1\\ 0 &0&1\\-1&-1&0
        \end{pmatrix}  s, \hspace{0.3in} s\in \mathbb{S}^2.
\end{equation*}
The matrix $A+uB=\begin{pmatrix}
         0&1&u\\ -1 &0&u\\-u&-u&0
        \end{pmatrix}$ 
has eigenvalues $\lambda_1=0$ and $\lambda_{2,3}=\pm i\beta,$ where $\beta =\sqrt{1+2u^2}.$ According to Observations \ref{solucion} and \ref{solucion2}, for $u=0$ and initial condition $s_0=(a,b,c)\in \mathbb{S}^2,$ the trajectory of the system is given by
\begin{equation*}
        s(t,s_0,0)=\begin{pmatrix}
        b\sin(t)+a\cos(t)\\ -a\sin(t)+b\cos(t)\\c
\end{pmatrix};
\end{equation*}
on the other hand, for $u\neq 0$ and initial condition  $s_0=(0,1,0),$ the trajectories of the system are given by
\begin{equation*}
 s(t,s_0,u)=\begin{pmatrix}
        -\dfrac{u^2}{\beta^2}+\dfrac{u^2}{\beta^2}\cos(\beta t)+\dfrac{1}{\beta}\sin(\beta t)\\
        \dfrac{u^2}{\beta^2}+\dfrac{1+u^2}{\beta^2}\cos(\beta t)\\
        -\dfrac{u}{\beta^2}+\dfrac{u}{\beta^2}\cos(\beta t)-\dfrac{u}{\beta}\sin(\beta t)
\end{pmatrix}.
\end{equation*}
Taking the control $u=1$ we have $\beta =\sqrt{3},$ thus, in $T=2\pi/3\sqrt{3},$ since $\cos(\sqrt{3} T)=-1/2$ and $\sin(\sqrt{3} T)=\sqrt{3}/2,$ we get 
\begin{equation*}
s(T,s_0,1)=\begin{pmatrix}
            0\\0\\ -1
    \end{pmatrix};
\end{equation*}
we note that, for $P_0^T:=(
    -1/3, 1/3, -1/3)
$ we get $\|s(t,s_0,1)-P_0\|^2=2/3,$ \emph{i.e.,} the trajectory is a circle centered at $P_0$ that passes through $s_0=(0,1,0)$ and $s(T,s_0,1)=(0,0,-1).$ Analogously, taking $u=-1$ and $T=2\pi/3\sqrt{3},$ we get
\begin{equation*}
s(T,s_0,-1)=\begin{pmatrix}
            0\\0\\ 1
\end{pmatrix};
\end{equation*}
for $P_1^T:=(-1/3, 1/3, 1/3)$ we have $\|s(t,s_0,-1)-P_1\|^2=2/3,$ \emph{i.e.,} the trajectory is a circle centered at $P_1$ that passes through $s_0=(0,1,0)$ and $s(T,s_0,-1)=(0,0,1).$ See Figure 4.
\begin{center}

\tikzset{every picture/.style={line width=0.75pt}} 

\begin{tikzpicture}[x=0.75pt,y=0.75pt,yscale=-1,xscale=1]

\draw   (139,139.19) .. controls (139,84.41) and (183.41,39.99) .. (238.2,39.99) .. controls (292.98,39.99) and (337.4,84.41) .. (337.4,139.19) .. controls (337.4,193.98) and (292.98,238.39) .. (238.2,238.39) .. controls (183.41,238.39) and (139,193.98) .. (139,139.19) -- cycle ;
\draw    (238.2,39.99) ;
\draw [shift={(238.2,39.99)}, rotate = 0] [color={rgb, 255:red, 0; green, 0; blue, 0 }  ][fill={rgb, 255:red, 0; green, 0; blue, 0 }  ][line width=0.75]      (0, 0) circle [x radius= 3.35, y radius= 3.35]   ;
\draw    (288.52,90) ;
\draw [shift={(288.52,90)}, rotate = 0] [color={rgb, 255:red, 0; green, 0; blue, 0 }  ][fill={rgb, 255:red, 0; green, 0; blue, 0 }  ][line width=0.75]      (0, 0) circle [x radius= 3.35, y radius= 3.35]   ;
\draw   (239.67,38.59) .. controls (244.44,34.06) and (270.17,53.41) .. (297.15,81.8) .. controls (324.13,110.19) and (342.14,136.88) .. (337.37,141.41) .. controls (332.61,145.94) and (306.87,126.59) .. (279.89,98.2) .. controls (252.91,69.81) and (234.9,43.12) .. (239.67,38.59) -- cycle ;
\draw    (337.37,141.41) ;
\draw [shift={(337.37,141.41)}, rotate = 0] [color={rgb, 255:red, 0; green, 0; blue, 0 }  ][fill={rgb, 255:red, 0; green, 0; blue, 0 }  ][line width=0.75]      (0, 0) circle [x radius= 3.35, y radius= 3.35]   ;
\draw    (286.81,188.89) ;
\draw [shift={(286.81,188.89)}, rotate = 0] [color={rgb, 255:red, 0; green, 0; blue, 0 }  ][fill={rgb, 255:red, 0; green, 0; blue, 0 }  ][line width=0.75]      (0, 0) circle [x radius= 3.35, y radius= 3.35]   ;
\draw   (307.7,100.51) -- (304.1,89.21) -- (314.54,94.84) ;
\draw   (292.56,193.92) -- (303.65,189.71) -- (298.59,200.44) ;
\draw   (281.94,94.4) -- (286.46,105.37) -- (275.59,100.62) ;
\draw   (288.72,177.66) -- (277.34,180.62) -- (283.3,170.47) ;
\draw    (238.2,238.39) ;
\draw [shift={(238.2,238.39)}, rotate = 0] [color={rgb, 255:red, 0; green, 0; blue, 0 }  ][fill={rgb, 255:red, 0; green, 0; blue, 0 }  ][line width=0.75]      (0, 0) circle [x radius= 3.35, y radius= 3.35]   ;
\draw   (236.22,238.59) .. controls (231.61,233.89) and (250.52,207.84) .. (278.46,180.39) .. controls (306.4,152.95) and (332.79,134.5) .. (337.4,139.19) .. controls (342.01,143.88) and (323.09,169.94) .. (295.15,197.38) .. controls (267.21,224.83) and (240.83,243.28) .. (236.22,238.59) -- cycle ;
\draw  [dash pattern={on 0.84pt off 2.51pt}] (139.47,141.64) .. controls (139.47,133.48) and (183.77,126.87) .. (238.42,126.87) .. controls (293.07,126.87) and (337.37,133.48) .. (337.37,141.64) .. controls (337.37,149.79) and (293.07,156.41) .. (238.42,156.41) .. controls (183.77,156.41) and (139.47,149.79) .. (139.47,141.64) -- cycle ;

\draw (197,170.29) node [anchor=north west][inner sep=0.75pt]    {$s( t,s_{0} ,1)$};
\draw (213,271.89) node [anchor=north west][inner sep=0.75pt]   [align=left] {Figure 4};
\draw (267.2,189.47) node [anchor=north west][inner sep=0.75pt]  [rotate=-356.45,xslant=0.06]  {$P_{0}$};
\draw (267.4,67.28) node [anchor=north west][inner sep=0.75pt]  [rotate=-356.45,xslant=0.06]  {$P_{1}$};
\draw (186,87.29) node [anchor=north west][inner sep=0.75pt]    {$s( t,s_{0} ,-1)$};
\draw (346,130.29) node [anchor=north west][inner sep=0.75pt]    {$s_{0} =( 0,1,0)$};
\draw (163,16.29) node [anchor=north west][inner sep=0.75pt]    {$( 0,0,1) =s( T,s_{0} ,-1)$};
\draw (156,243.4) node [anchor=north west][inner sep=0.75pt]    {$( 0,0,-1) =s( T,s_{0} ,1)$};
\end{tikzpicture}
\end{center}
Using these trajectories, with controls $u=\pm 1,$ and the circular trajectories, with control $u=0,$ we obtain the controllability of the system.
\end{ejemplo}

\paragraph{\bf Acknowledgments.} This work is part of the first author’s master thesis \cite{MC}; he thanks the Terminal Master's program in Mathematics at the Universidad Mayor de San Andrés.

\Addresses

\end{document}